\newtheorem{prop}{Proposition}
\newtheorem{lemma}{Lemma}
\newtheorem{corollary}{Corollary}
\newtheorem{theorem}{Theorem}
\newtheorem{remark}{Remark}
\newtheorem{example}{Example}
\newcommand {\qed}%
{%
    {}\hfill
    {}\hfill
    {$\square $}%
    \vspace {0.3cm}%
    \pagebreak [2]%
    \par
}%
\newenvironment{proof}[1]{%
    \vspace{0.3cm}%
    \pagebreak [2]%
    \par%
    \noindent{\bf Proof~#1\ }}{\qed }%
\def\real{{\mathord{{\rm I\kern-2.8pt R}}}}        
\def\inte{{\mathord{{\rm I\kern-2.8pt N}}}}
\def\sZZ{{\rm Z\kern-2.8ptem{}Z}}
\def\z{{\mathchoice
  {\sZZ}
  {\sZZ}
  {\rm Z\kern-0.30em{}Z}
  {\rm Z\kern-0.25em{}Z} }}
\def\sQQ{{\kern 0.27em \vrule height1.45ex width0.03em depth0em
          \kern-0.30em \rm Q}}
\def\qu{{\mathchoice
    {\sQQ}
    {\sQQ}
  {\kern 0.225em \vrule height1.05ex width0.025em depth0em \kern-0.25em \rm Q}
  {\kern 0.180em \vrule height0.78ex width0.020em depth0em \kern-0.20em \rm Q}
        }}
\def\sCC{{\kern 0.27em \vrule height1.45ex width0.03em depth0em
          \kern-0.30em \rm C}}
\def\complex{{\mathchoice
    {\sCC}
    {\sCC}
  {\kern 0.225em \vrule height1.05ex width0.025em depth0em \kern-0.25em \rm C}
  {\kern 0.180em \vrule height0.78ex width0.020em depth0em \kern-0.20em \rm C}
        }}
\newcommand{\R}{\mathbb{R}}
\newcommand{\ba}{\begin{array}}
\newcommand{\ea}{\end{array}}
\newcommand{\be}{\begin{equation}}
\newcommand{\ee}{\end{equation}}
\newcommand{\bea}{\begin{eqnarray}}
\newcommand{\eea}{\end{eqnarray}}
\newcommand{\beaa}{\begin{eqnarray*}}
\newcommand{\eeaa}{\end{eqnarray*}}
\def\b{\beta}
\def\z{\zeta}
\font\tenmath=msbm10 \font\sevenmath=msbm7 \font\fivemath=msbm5
\def \b{\noindent}
\def \={{\buildrel {\rm (law)} \over =}}
\def \E{{\mathbb E}}
\def\cB{{\cal B}}
\def\cF{{\cal F}}
\def\cS{{\cal S}}
\def\qed{ \hfill \vrule width.25cm height.25cm depth0cm\smallskip}
\newcommand{\basa}{\begin{assumption}}
\newcommand{\easa}{\end{assumption}}
\newcommand{\bas}{\begin{assum}}
\newcommand{\eas}{\end{assum}}
\def\limsup{\mathop{\overline{\rm lim}}}
\def\liminf{\mathop{\underline{\rm lim}}}
\newcommand{\ignore}[1]{}
\begin{document}

\renewcommand{\thefootnote}{\fnsymbol{footnote}}

\renewcommand{\thefootnote}{\fnsymbol{footnote}}

\title{Sample Paths of the Solution to the Fractional-colored Stochastic Heat Equation}

\author{Ciprian A. Tudor  \footnote{  Supported by the CNCS grant PN-II-ID-PCCE-2011-2-0015 (Romania).
} $^{1, 2} $ and
Yimin Xiao \thanks{Research of Y. Xiao is partially
supported by NSF grants DMS-1307470 and DMS-1309856.} $^{3}$, \vspace*{0.1in} \\
 $^{1}$ Laboratoire Paul Painlev\'e, Universit\'e de Lille 1\\
 F-59655 Villeneuve d'Ascq, France.\\
$^{2}$  Academy of Economical Studies, Piata Romana nr.6, sector 1 \\
 Bucharest, Romania.\\
 \quad tudor@math.univ-lille1.fr\\
$^{3}$  Department of Statistics and Probability, Michigan State University\\
 East Lansing, MI 48824, U.S.A.\\
 xiao@stt.msu.edu\\
\vspace*{0.1in} }
\maketitle

\maketitle

\begin{abstract}
Let $u = \{u(t,x), t\in [0,T], x\in \mathbb{R} ^ {d}\}$ be the solution to the linear stochastic heat equation 
driven by a fractional noise in time with correlated spatial structure. We study various  path properties 
of the process $u$ with respect to the time and space variable, respectively.  In particular, we derive 
their exact uniform and local moduli of continuity and Chung-type laws of the iterated logarithm.
\end{abstract}

\vskip0.3cm

{\bf 2010 AMS Classification Numbers:}  60H55, 60H07, 91G70.

 \vskip0.3cm

{\bf Key words:} Stochastic heat equation, fractional Brownian motion, Gaussian noise, path regularity, 
 law of the iterated logarithm.

\section{Introduction}

Stochastic analysis of fractional Brownian motion (fBm)
naturally led to the study of stochastic partial differential equations (SPDEs) driven by this
Gaussian process. The motivation comes from wide applications of fBm. We refer,
among others, to \cite{GLT06}, \cite{maslowski-nualart03}, \cite{nualart-vuillermont06},
\cite{QS-tindel07} and \cite{TTV} for theoretical studies of SPDEs driven by fBm. To list only a
few examples of applications of fractional noises in various  areas, we mention
\cite{KouSinney04} for biophysics, \cite{BPS04} for financial time series, \cite{DMS03} for electrical
engineering, and \cite{CCL03} for physics.

The purpose  of our present paper is to study fine properties of the solution to the stochastic 
heat equation driven by a Gaussian noise which is fractional in time and colored in space. 
Our work continues, in part, the line of research which concerns SPDEs driven by the
fBm but at the same time it follows the research line initiated by Dalang \cite{Da} which treats
equations with white noise  in time and non trivial correlation in space. More precisely, we consider
a stochastic linear equation driven by a Gaussian noise that behaves as a fractional Brownian motion
with respect to its time variable and has a correlated spatial structure. A necessary and sufficient
condition for the existence of the solution has been given in \cite{BalanTudor1} and other results 
has been given in \cite{T}, \cite{EF}, \cite{B}, \cite{BC} among others.

To briefly describe the context, consider the stochastic heat equation with additive
noise
\begin{eqnarray}
\label{eq1} \frac{\partial u}{\partial t} &=& \frac{1}{2}
\Delta u + \dot {W} , \quad t  \in [0,T], \mbox{\ } x\in \mathbb{R}^{d}, \\
\nonumber u (0,x) &=& 0, \quad x \in \mathbb{R}^d,
\end{eqnarray}
where $T > 0$ is a constant and $\dot {W}$ is usually referred
to as a {\it space-time white noise}. It is well-known (see for example the now classical paper  \cite{Da}) that (\ref{eq1})
admits a unique mild solution if and only if $d=1$. This mild solution is defined as
\begin{equation}
\label{sol}
 u(t,x)=\int_{0}^{t}
\int_{\mathbb{R}^{d}} G(t-s,x-y)W(ds,dy), \hskip0.5cm t\in [0,T], \, x\in \mathbb{R}.
\end{equation}
In the above,  $W=\{W(t,A); t \ge 0,\, A \in \cB_{b}(\mathbb{R}^d)\}$ is
a zero-mean Gaussian process with covariance given by
\begin{equation}
\label{cov1a}
\E \left(  W(t,A)W(s,B) \right) = (s\wedge t) \lambda (A \cap B),
\end{equation}
where $\lambda$ denotes the Lebesque measure and  $\cB_{b}(\mathbb{R}^d)$ is the collection of
all bounded Borel subsets of $\mathbb{R}^d$. The integral  in (\ref{sol}) is a Wiener integral with 
respect to the Gaussian process $W$  and $G $ is the Green kernel of the heat equation given by
\begin{equation}
\label{fund-sol-heat} G(t,x)=\left\{
\begin{array}{ll} (2 \pi t)^{-d/2} \exp\left( -\frac{|x|^{2}}{2t}\right) & \mbox{if $t>0, x \in \mathbb{R}^{d}$},  \\
0 & \mbox{if $t \leq 0, x \in \mathbb{R}^{d}$}.
\end{array} \right.
\end{equation}
Consequently the mild solution $\{u(t,x), t\in [0,T] , x \in \mathbb{R}\} $ is a centered
two-parameter Gaussian process (also called a Gaussian random field). It is well-known (see e.g.
\cite{Swanson1} or \cite{T}) that
the solution defined by the (\ref{sol}) is  well-defined  if and only if $d=1$ and in this case   
the covariance of the solution (\ref{sol}), when $x\in \mathbb{R}$ is fixed, satisfies   
\begin{equation}\label{covu}
\E \left(  u(t,x) u(s,x) \right) = \frac{1}{\sqrt{2\pi} } \left( \sqrt{ t+s}-\sqrt{\vert t-s\vert } \right),
\mbox{ for every }  s,t \in [0,T].
\end{equation}
This establishes an interesting connection between the law of the solution (\ref{sol})
and the so-called {\it bifractional Brownian motion} introduced by \cite{HV}. Recall that, given
constants $H\in (0,1)$ and $K\in (0,1]$,  the bifractional Brownian
motion $(B^{H,K}_{t})_{t \in [0,T]}$ is a centered Gaussian process with covariance
  \begin{equation}
    \label{cov-bi}
    R^{H,K}(t,s) := R(t,s)= \frac{1}{2^{K}}\left( \left(
        t^{2H}+s^{2H}\right)^{K} -\vert t-s \vert ^{2HK}\right), \hskip0.5cm s,t \in [0,T].
  \end{equation}
Relation (\ref{covu}) implies that, when $x \in \R$ is fixed, the Gaussian process
$\left\{u(t,x), t \in [0,T] \right\} $ is  a bifractional Brownian motion with
parameters $H=K=\frac{1}{2}$ multiplied by the constant $2^{-K} \frac{1}{\sqrt{2\pi }}$.
Therefore, many sample path properties of the solution to the heat equation driven by
space time white-noise follow from \cite{RT, LN09, RT, TX}.
\vskip0.3cm

In order to avoid the restriction to the dimension $d=1$, several authors considered other 
types of noises, usually more regular than the time-space white noise.   An approach is to 
consider the {\it white-colored
noise}, meaning  a Gaussian process $W=\{W(t,A); t \ge 0,\, A \in \cB_{b}(\mathbb{R}^d)\}$
with covariance with zero mean and covariance
\begin{equation}\label{cov2a}
\E \left(W(t,A) W(s,B) \right)= (t\wedge s) \int_{A}\int_{B} f(z-z') dzdz'.
\end{equation}
Here  the kernel $f$ is  the Fourier transform of a tempered  non-negative measure
 $\mu$ on $\mathbb{R}^d$, i.e. $\mu$ is a non-negative measure which satisfies:
\begin{equation} \label{mu-tempered}
\int_{\mathbb{R}^d}
\left(\frac{1}{1+ |\xi|^{2}} \right)^l \mu(d\xi)<\infty \quad
 \mbox{for some} \ l >0.
\end{equation}
Recall that the Fourier transform $f$ of $\mu$ is defined through
$$\int_{\mathbb{R}^d}f(x)\varphi(x)dx=\int_{\mathbb{R}^d}\cF
\varphi(\xi)\mu(d\xi), \quad \forall \varphi \in \cS(\mathbb{R}^d), $$
where $\cS(\mathbb{R}^d)$ denotes the Schwarz space on $\mathbb{R} ^{d}$. 
A useful connection between the kernel $f$ and its associated measure $\mu$  is the
Parseval inequality:  for any
$\varphi, \psi \in \cS(\mathbb{R}^d)$,
\begin{equation}\label{parseval}
\int_{\mathbb{R}^d} \int_{\mathbb{R}^d} \varphi(x)f(x-y)\psi(y)dx dy=(2\pi) ^{-d}
\int_{\mathbb{R}^d}\cF \varphi(\xi) \overline{\cF \psi(\xi)}\mu(d\xi).
\end{equation}

In this work, we will include the following two following basic examples.

\begin{example}
The Riesz kernel of order $\alpha$: 
\begin{equation}
\label{riesz}
f(x)=R_{\alpha}(x):=
\gamma_{\alpha,d}|x|^{-d+\alpha},
\end{equation}
 where  $0<\alpha<d$, $\gamma_{\alpha,d}=2^{d-\alpha} \pi^{d/2}
 \Gamma((d-\alpha)/2)/\Gamma(\alpha/2)$. In this case,
$\mu(d\xi)=  |\xi|^{-\alpha}d \xi. $
\end{example}

\begin{example}
\label{bessel} The Bessel kernel of order $\alpha$:
$$f(x)=B_{\alpha}(x):=\gamma'_{\alpha}\int_{0}^{\infty}w^{(\alpha-d)/2-1}
e^{-w}e^{-|x|^{2}/(4w)} dw, $$ where $\alpha>0,$\,
$\gamma'_{\alpha}=(4\pi)^{\alpha/2}\Gamma(\alpha/2)$. In this
case, $\mu(d\xi)=(1+|\xi|^{2})^{-\alpha/2} d \xi.$
\end{example}

For any function  $g \in L^{1} (\mathbb{R} ^{d})$ we denote by $\cF g$  the Fourier transform of $g$, i.e.,
\begin{equation*}
(\cF g) (\xi) = \int_{\mathbb{R} ^{d} } e^{-i \langle \xi, x \rangle} g(x)dx, \hskip0.5cm \xi \in \mathbb{R}^{d},
\end{equation*}
where $\langle \cdot, \cdot\rangle$ denotes the inner product in $\R^d$. 
Recall that for any  $ t \in \R$ and $x \in \R^d$, the Fourier transform of the fundamental solution (\ref{fund-sol-heat}) is given by
 \begin{equation}
\label{Fourier-heat} \cF G (t, x-\cdot)(\xi)= \exp
\left( i  \langle x, \xi \rangle -\frac{t|\xi|^2}{2} \right) \mathbf{1}_{\{t > 0\}} (\xi), \hskip0.4cm \xi \in \mathbb{R}^d.
\end{equation}

Dalang \cite{Da} proved that the stochastic heat equation with
white-colored noise admits a unique solution if and only if 
\begin{equation}\label{Eq:Dalang}
\int_{\mathbb{R}^d} \frac{1}{1+ |\xi|^{2}}  \mu(d\xi)<\infty.
\end{equation}
Obviously, this condition allows $x$ to be in higher dimensional space. For example in
the case of the Riesz kernel, the stochastic heat equation with white-colored noise
admits an unique solution if and only if $d<2+ \alpha.$

Under (\ref{Eq:Dalang}), the solution of (\ref{eq1}) with white-colored noise can still be written as in (\ref{sol}).
One can compute the covariance of the solution with respect to the time variable (see e.g. \cite{OT}).
For fixed $x\in \mathbb{R}^{d}$, and for every $s\leq t$ it follows from
Parseval's identity and (\ref{Fourier-heat})
\begin{eqnarray}
\label{Eq:OT}
\E \big(u(t,x) u(s,x) \big)=  (2\pi) ^{-d}  \int_{0}^{s} du \int_{\mathbb{R} ^{d}}\mu (d\xi)
e^{-\frac{1}{2} (t-u) \vert \xi \vert ^{2}}e^{-\frac{1}{2} (s-u) \vert \xi \vert ^{2}}.
\end{eqnarray}
In the case when $f$ is the Riesz kernel (i.e., $\mu (d\xi)= \vert \xi \vert ^{-\alpha}d\xi$,
we get
\begin{eqnarray*}
\E \big(u(t,x) u(s,x)\big) =(2\pi )^{-d} \int_{\mathbb{R} ^{d}} \frac{d\xi} {\vert \xi \vert ^{\alpha}}\,
e^{-\frac{1}{2}  \vert \xi \vert ^{2}}\frac{1}{1-\frac{d -\alpha}{2}}
\left((t+s)^{1-\frac{d-\alpha }{2}}-(t-s)^{1-\frac{d-\alpha }{2}}\right).
\end{eqnarray*}
In this case,  the solution coincides, modulo a constant, with a
bifractional Brownian motion with parameters $H= \frac{1}{2}$ and $K= 1-\frac{d-\alpha }{2}$.
Thus the sample path properties of this process can be  deduced from \cite{LN09, RT, TX}.

Our purpose in this paper is to study  the linear heat equation driven by a fractional-colored  
Gaussian noise  (see section 2 for details). When the structure of 
the noise with respect to the time variable changes and the white noise is 
replaced by a fractional noise,  the solution does not coincide with a bifractional 
Brownian motion anymore  (see \cite{BoTu} or \cite{T}). Some new methods are needed for  
analyzing the path properties of the solution with respect to the time and to the space variables. 
By appealing to general methods for Gaussian processes and fields (cf. e.g., \cite{MRbook, Xiao07, Xiao1}), 
we establish the exact uniform and local moduli of continuity and Chung type laws of the iterated logarithm
for the solution of (\ref{eq1}) with a fractional-colored  Gaussian noise.

Throughout this paper, we will use C to denote unspecified positive finite constants which may
be different in each appearance. More specific constants are numbered as $c_1,\, c_2 , \ldots.$

\section{The solution to the stochastic heat equation with fractional-colored noise}

We consider a Gaussian field
$\left\{ W^{H} (t,A), t\in [0,T] , A\in \cB_b (\mathbb{R} ^{d}) \right\} $ with covariance
\begin{equation}
\label{cov2}
\E \left( W^{H} (t,A) W^{H} (s, B) \right) = R_{H}(t,s) \int_{A}\int_{B} f(z-z') dzdz',
\end{equation}
where $R_{H}(t,s):=\frac{1}{2} (t^{2H}+ s^{2H} -\vert t-s \vert ^{2H} )$ is the covariance of
the fractional Brownian motion with index $H$ and $f$ is the Fourier transform of
a tempered measure $\mu$.  This noise is usually
called {\it fractional-colored } noise.  We will assume throughout the paper that the Hurst parameter
$H$ is contained in the interval $(\frac{1}{2}, 1)$. 

Consider the linear stochastic heat equation
\begin{eqnarray}
\label{heatEq}
\frac{\partial u}{\partial t} = \frac{1}{2}\Delta u + \dot{W}^{H}, \mbox{\  \  \  }
t \in \left[0,T\right], x \in \mathbb{R}^{d}
\end{eqnarray}
with vanishing initial condition, where $\{W^H(t,x), \, t \in \left[0,T\right], x \in \mathbb{R}^{d} \}$ is a
centered Gaussian noise with covariance (\ref{cov2}). In the following we collect some known facts:
\begin{description}
\item{$\bullet$ } A necessary and sufficient condition for the existence of the mild solution to the
fractional-colored heat equation (\ref{heatEq})  has been given in  \cite{BalanTudor1} (see also \cite{OT}).
Namely, (\ref{heatEq})  has a solution  $\{u (t,x), t \geq 0, x \in \mathbb{R}^{d}\}$ that satisfies
$$ \underset{t \in \left[0,T\right], x \in \mathbb{R}^{d}}{\sup} \E\left(u(t,x)^{2}\right) < +\infty $$
if and only if
\begin{equation}\label{29m-1}
\int_{\mathbb{R}^d}
\left(\frac{1}{1+ |\xi|^{2}} \right)^{2H} \mu(d\xi)<\infty.
\end{equation}

\item{$\bullet$ } When (\ref{29m-1}) holds,  the solution to (\ref{heatEq}) can be written in the mild form as
\begin{equation}\label{u}
u(t,x)= \int_{0} ^{t} \int_{\mathbb{R} ^{d}} G(t-u, x-y) W^{H}(du, dy), \hskip0.5cm t\in [0,T],\
x \in \mathbb{R} ^{d} .
\end{equation}
It follows from \cite{BalanTudor1} or \cite{OT} that, for $x\in \mathbb{R}^{d}$ fixed, the covariance 
function of the Gaussian process $\{u(t,x), t\in [0,T]\}$
can be written as 
\begin{equation}
\label{cov1}
\E \left(u(t,x)u(s,x) \right) = \alpha _{H} (2\pi ) ^{-d} \int_{0} ^{t} \int_{0} ^{s} \frac{dudv}
{ \vert u-v\vert ^{2-2H}} \int_{\mathbb{R} ^{d}}   \mu (d\xi) e^{-\frac{(t-u) \vert \xi \vert ^{2}}{2}}
e^{-\frac{(s-v) \vert \xi \vert ^{2}}{2}}
\end{equation}
with $\alpha_{H}= H(2H-1)$. In the particular case where the spatial covariance is given by the
Riesz kernel, the process
$t \mapsto u(t, x)$ is self-similar with parameter $H-\frac{d-\alpha}{4}.$ However, for $H \in (\frac 1 2, 1),$
$\{u(t, x), t \ge 0\}$ is no longer a bifractional Brownian motion.
\end{description}

For $0<\alpha <d$,  the notation 
\begin{equation}
\label{hyp}
 \mu (d \xi) \asymp  \vert \xi  \vert ^{-\alpha}d\xi, 
\end{equation}
means that for every non-negative function $h$  there exist two  positive 
and finite constants $C$ and $C'$, which may depend on $h$,  such that
\begin{equation}\label{Eq:mu}
C'\int_{\mathbb{R} ^{d} } h(\xi )   \vert \xi  \vert ^{-\alpha}d\xi\leq \int_{\mathbb{R} ^{d}} h(\xi) \mu (d\xi )
\leq C \int_{\mathbb{R} ^{d} } h(\xi )   \vert \xi  \vert ^{-\alpha}d\xi.
\end{equation}
It has been proven in \cite{T} that, under condition (\ref{hyp}), there exist two strictly positive constants
$c_{1}, c_{2}$ such that for all $t,s \in [0,1]$ and for all $x\in \mathbb{R}^{d}$,
\begin{equation}
\label{reg}
c_{1} \vert t-s \vert ^{2H-\frac{d-\alpha}{2}} \leq \E
\left(| u(t,x) -u(s, x)| ^{2}\right) \leq c_{2} \vert t-s \vert
^{2H-\frac{d-\alpha}{2}}.
\end{equation}

\begin{remark}
The Riesz kernel obviously satisfies (\ref{hyp}). The Bessel kernel satisfies (\ref{hyp}) and the constants in 
\eqref{Eq:mu} are  $C=1$ and $C'>0$ depending on $h$.

Under Condition (\ref{hyp}),  the condition (\ref{29m-1}) is equivalent to
\begin{equation}\label{cc}
d<4H+ \alpha.
\end{equation}
\end{remark}

In the sequel (e.g., Theorem \ref{deco} below), we will also use the notation $f(x) \asymp g(x)$ which means that there
exists two   positive and finite constants $C$ and $C'$ such that $C' \le f(x)/g(x)\le C$ for all $x$ in the domain of $f$ and $g$.
In the rest of  the paper, we will assume that (\ref{hyp}) is satisfied.

\subsection{Sharp regularity in time}
\label{sec:time}

For any fixed $x \in \R^d$, to further study the regularity properties of the sample function
$t \mapsto u(t, x)$,  we decompose the solution $\left\{u(t,x), t\geq 0\right\} $ in
(\ref{u}) into the sum of a Gaussian process $U = \{U(t), t \ge 0\}$,
which has stationary increments, and another Gaussian process whose sample functions 
are continuously differentiable on $(0, \infty)$.  In particular, when the spatial covariance 
of noise $ W^{H}$ is given by  the Riesz kernel (\ref{riesz}), then $U = \{U(t), t \ge 0\}$ 
becomes fractional Brownian motion with Hurst parameter $\gamma= H-\frac{d-\alpha}{4}$. 
We will apply this decomposition to obtain the regularity in time of the solution $\left\{u(t,x), t\geq 0\right\}$.

Motivated by \cite{MT02} (see also \cite{MZ, WX06} for related results), we introduce the pinned string 
process in time $\left\{  U(t), t\geq 0 \right\} $  defined by
\begin {eqnarray*}
U(t)&=& \int_{-\infty} ^{0} \int_{\mathbb{R} ^{d}} \big( G(t-u, x-y) -G (-u, x-y) \big) W ^{H}(du, dy)\\
&& \qquad \qquad \qquad + \int_{0} ^{t} \int_{\mathbb{R} ^{d} } G(t-u, x-u)W ^{H} (du, du).
\end{eqnarray*}
Note that $U(0) =0$ and  $U(t)$ can be expressed as
\begin{equation}\label{u1}
U(t)= \int_{\mathbb{R}} \int_{\mathbb{R} ^{d}} \big( G((t-u)_{+}, x-y) -G( (-u)_{+} , x-y) \big) W ^{H}(du, dy).
\end{equation}
In the above, $a_{+}= \max (a, 0)$. The following theorem shows that $\left\{  U(t), t\geq 0 \right\} $
has stationary increments and identifies its spectral measure. This information is useful for appleying the
methods in  \cite{MRbook, Xiao07, Xiao1} to study sample path properties of $\left\{ U(t), t\geq 0\right\}$.


\begin{theorem}\label{deco}
The Gaussian process $\left\{ U(t), t\geq 0\right\}$ given by (\ref{u1}) has stationary increments and its
spectral density is given by
\begin{equation}\label{Eq:spectralden1}
  f_{U}(\tau) = \frac{2 (2 \pi)^{-d}\alpha _{H}} {  \vert \tau \vert ^{2H-1} } \,
\int_{\mathbb{R} ^{d}} \frac{\mu(d\xi)} { \tau ^{2} + \frac{\vert \xi \vert ^{4}}{4}}.
\end{equation}
Moreover, under condition (\ref{hyp}), we have
\begin{equation}\label{Eq:spectralden2}
f_{U}(\tau) \asymp \frac 1 {\vert \tau \vert ^{2H - \frac{d-\alpha}{2} +1}}.
\end{equation}
\end{theorem}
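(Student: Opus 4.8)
The plan is to compute the covariance of the increments of $U$ in the Fourier domain and to read off both the stationarity and the density $f_U$ from the resulting expression. Since $H\in(\tfrac12,1)$, the kernel $|u-v|^{2H-2}$ is locally integrable and the temporal covariance of $W^H$ satisfies $\partial^2_{uv}R_H(u,v)=\alpha_H|u-v|^{2H-2}$ with $\alpha_H=H(2H-1)$; hence for deterministic integrands the Wiener integral against $W^H$ has inner product
$$\E\Big[\int g\,dW^H\int h\,dW^H\Big]=\alpha_H\int_{\mathbb{R}}\int_{\mathbb{R}}|u-v|^{2H-2}\Big(\int_{\mathbb{R}^d}\int_{\mathbb{R}^d}g(u,y)f(y-y')h(v,y')\,dy\,dy'\Big)du\,dv.$$
Applying Parseval's identity (\ref{parseval}) to the spatial double integral and the explicit transform (\ref{Fourier-heat}) of the heat kernel, the phase $e^{i\langle x,\xi\rangle}$ cancels and the increment $U(t)-U(s)$ is governed by the spatial symbol
$$\psi_{t,s}(u,\xi)=e^{-\frac{(t-u)|\xi|^2}{2}}\mathbf{1}_{\{u<t\}}-e^{-\frac{(s-u)|\xi|^2}{2}}\mathbf{1}_{\{u<s\}},$$
so that
$$\E\big[(U(t)-U(s))^2\big]=\alpha_H(2\pi)^{-d}\int_{\mathbb{R}^d}\mu(d\xi)\int_{\mathbb{R}}\int_{\mathbb{R}}|u-v|^{2H-2}\psi_{t,s}(u,\xi)\psi_{t,s}(v,\xi)\,du\,dv.$$

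Next I would insert the Fourier representation $|u-v|^{2H-2}=c_H\int_{\mathbb{R}}e^{i(u-v)\tau}|\tau|^{1-2H}d\tau$ (valid because $2-2H\in(0,1)$), with the normalising constant $c_H$ chosen so as to generate the factor $2$ in (\ref{Eq:spectralden1}). A one-dimensional computation gives $\int_{\mathbb{R}}e^{iu\tau}e^{-\frac{(t-u)|\xi|^2}{2}}\mathbf{1}_{\{u<t\}}\,du=e^{it\tau}\big(\tfrac{|\xi|^2}{2}+i\tau\big)^{-1}$, so the partial Fourier transform in time of $\psi_{t,s}$ equals $(e^{it\tau}-e^{is\tau})\big(\tfrac{|\xi|^2}{2}+i\tau\big)^{-1}$ and its squared modulus is $|e^{i(t-s)\tau}-1|^2\big(\tau^2+\tfrac{|\xi|^4}{4}\big)^{-1}$. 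Exchanging the $\tau$- and $\xi$-integrations by Tonelli (legitimate since the integrand is nonnegative) yields
$$\E\big[(U(t)-U(s))^2\big]=\int_{\mathbb{R}}\big|e^{i(t-s)\tau}-1\big|^2 f_U(\tau)\,d\tau,$$
with $f_U$ precisely as in (\ref{Eq:spectralden1}). Carrying out the same computation for $\E[(U(t)-U(s))(U(t')-U(s'))]$ produces $\int_{\mathbb{R}}(e^{it\tau}-e^{is\tau})\overline{(e^{it'\tau}-e^{is'\tau})}f_U(\tau)\,d\tau$; replacing $(t,s,t',s')$ by $(t+h,s+h,t'+h,s'+h)$ multiplies the integrand by $e^{ih\tau}e^{-ih\tau}=1$, so the increment covariance is shift-invariant and $\{U(t)\}$ has stationary increments with spectral density $f_U$.

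For the asymptotic (\ref{Eq:spectralden2}) I would use the comparison (\ref{hyp}): choosing the nonnegative test function $h_\tau(\xi)=\big(\tau^2+\tfrac{|\xi|^4}{4}\big)^{-1}$ in (\ref{Eq:mu}) bounds the inner integral above and below by constant multiples of $\int_{\mathbb{R}^d}|\xi|^{-\alpha}\big(\tau^2+\tfrac{|\xi|^4}{4}\big)^{-1}d\xi$. Passing to polar coordinates and scaling $r=|\tau|^{1/2}\rho$ factors out the power of $\tau$, leaving $\int_0^\infty\rho^{d-1-\alpha}\big(1+\tfrac{\rho^4}{4}\big)^{-1}d\rho$, a finite constant because $0<\alpha<d$ controls the origin and $d<4+\alpha$ (implied by the standing condition (\ref{cc}), as $H<1$) controls infinity. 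This gives $\int_{\mathbb{R}^d}\mu(d\xi)\big(\tau^2+\tfrac{|\xi|^4}{4}\big)^{-1}\asymp|\tau|^{\frac{d-\alpha}{2}-2}$, and multiplying by the prefactor $|\tau|^{1-2H}=|\tau|^{-(2H-1)}$ gives $f_U(\tau)\asymp|\tau|^{\frac{d-\alpha}{2}-2H-1}$, which is exactly (\ref{Eq:spectralden2}).

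The step I expect to be most delicate is the constant and integrability bookkeeping: one must normalise the Fourier inversion of $|u-v|^{2H-2}$ correctly to recover the factor $2$, justify the Tonelli exchange and thereby confirm that $f_U$ is a genuine density rather than a measure carrying atoms, and, for (\ref{Eq:spectralden2}), check that the comparison constants supplied by (\ref{Eq:mu}) may be taken uniform in $\tau$. The last point is immediate for the Riesz kernel, where $\mu(d\xi)=|\xi|^{-\alpha}d\xi$ exactly, and for the Bessel kernel it is handled by splitting the $\xi$-integral over $\{|\xi|\le1\}$ and $\{|\xi|>1\}$.
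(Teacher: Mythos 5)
Your proposal is correct and follows essentially the same route as the paper: spatial Parseval plus the Fourier transform of the heat kernel to reduce to a $\mu(d\xi)$-integral, then the one-dimensional Fourier representation of $|u-v|^{2H-2}$ applied to the truncated exponential $\psi_{t,s}$ to produce the factor $|e^{i(t-s)\tau}-1|^2(\tau^2+\tfrac{|\xi|^4}{4})^{-1}$, Fubini--Tonelli to read off $f_U$, and the scaling $r=|\tau|^{1/2}\rho$ under (\ref{hyp}) for the asymptotic (\ref{Eq:spectralden2}). Your explicit verification of shift-invariance of the increment covariance and your remark about the uniformity in $\tau$ of the comparison constants in (\ref{Eq:mu}) are minor refinements of, not departures from, the paper's argument.
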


\begin{proof}:  For every $0\leq s<t$,  by Parseval's identity (\ref{parseval})  and relation (\ref{Fourier-heat}), we can write
\begin{eqnarray*}
&& \mathbb{E} \left[ (U(t)- U(s))^{2}\right] \\
&&= \mathbb{E} \left( \int_{\mathbb{R}} \int_{\mathbb{R} ^{d}} \left( G((t-u)_{+}, x-y) -G( (s-u)_{+} , x-y) \right) W ^{H}(du, dy)\right) ^{2} \\
&&= (2 \pi)^{-d} \alpha_{H}\int_{\mathbb{R}^d} \mu(d\xi) \int_{\mathbb{R}} \int_{\mathbb{R}} \frac{dudv} {\vert u-v\vert ^{2-2H}}
\left( e^{-\frac{1}{2} (t-u) \vert \xi \vert ^{2}} \mathbf{1}_{\{t >u\}}- e^{-\frac{1}{2} (s-u) \vert \xi \vert ^{2}} \mathbf{1}_{\{s >u\}}\right)\\
&&\qquad \qquad \qquad \qquad \qquad \qquad \qquad \qquad \times
\left( e^{-\frac{1}{2} (t-v)\vert \xi \vert ^{2}} \mathbf{1}_{\{t >v\}}- e^{-\frac{1}{2} (s-v) \vert \xi \vert ^{2}} \mathbf{1}_{\{s >v\}}\right).
\end{eqnarray*}
Let $\varphi(u) =  e^{-\frac{1}{2} (t-u) \vert \xi \vert ^{2}} \mathbf{1}_{\{t >u\}}- e^{-\frac{1}{2} (s-u) \vert \xi \vert ^{2}} \mathbf{1}_{\{s >u\}}$.
Then its Fourier transform is
\begin{equation}\label{Eq:F-phi}
{\mathcal F}\varphi(\tau)= \big(e^{-i t\tau} - e^{-i s\tau}\big) \frac 1 {i\tau + \frac 1 2 |\xi|^2}.
\end{equation}
 By using again Parseval's relation (\ref{parseval}) in dimension $d=1$ and (\ref{Eq:F-phi}), we get
\begin{eqnarray*}
\mathbb{E}\left[( U(t)- U(s))^{2}\right]
&=&(2 \pi)^{-d} \alpha _{H} \int_{\mathbb{R}^{d}} \mu(d\xi) \int_{\mathbb{R}}
\frac{d\tau} {\vert \tau \vert ^{2H-1}} \big| {\mathcal F}\varphi(\tau)\big|^{2}\\
&&= (2 \pi)^{-d} \alpha _{H}\int_{\mathbb{R}^{d}} \mu(d\xi) \int_{\mathbb{R} }\frac{ d\tau} {\vert \tau \vert ^{2H-1}}
\frac{ 2 \left[ 1-\cos ((t-s)\tau)\right] }{\tau ^{2} + \frac{\vert \xi \vert ^{4}}{4}}\\
&&=2(2 \pi)^{-d} \alpha _{H} \int_{\mathbb{R}}\big[1-\cos ((t-s)\tau)\big] \frac{ d\tau} {\vert \tau \vert ^{2H-1} } \,
\int_{\mathbb{R} ^{d}} \frac{\mu(d\xi)} { \tau ^{2} + \frac{\vert \xi \vert ^{4}}{4}},
\end{eqnarray*}
where the last step follows from Fubini's theorem and the convergence of the last integral in $\mu(d\xi)$ is
guaranteed by relation  (\ref{29m-1}). It follows that the Gaussian process $\left\{ U(t), t\geq 0\right\}$
has stationary increments and its spectral density  is  given by (\ref{Eq:spectralden1}).

Under condition (\ref{hyp}), we have
\[
f_{U}(\tau)\asymp \frac{1}{ \vert \tau \vert^{2H-1}}
\int_{\mathbb{R} ^{d}} \frac{d\xi } { \vert \xi \vert^{\alpha} \left( \tau ^{2} + \frac{ \vert \xi \vert ^{4}}{2} \right) }
\asymp \frac 1 {\vert \tau \vert ^{2H - \frac{d-\alpha}{2} +1}}.
\]
This finishes the proof of  (\ref{Eq:spectralden2}) and thus the conclusion of Theorem \ref{deco} is obtained.
\end{proof}

When the spatial covariance of noise $W^{H}$ is given by the Riesz kernel (\ref{riesz}),  we have
\begin{eqnarray*}
f_{U}(\tau)&=& \frac{\alpha _{H} 2 ^{2H-1} \Gamma (H-\frac{1}{2}) }{ (2 \pi)^{d +\frac 1 2}   \Gamma (1-H) \, \vert \tau \vert ^{2H-1}}
\int_{\mathbb{R} ^{d}} \frac{d\xi } { \vert \xi \vert ^{\alpha} \left( \tau ^{2} + \frac{ \vert \xi \vert ^{4}}{4} \right) }\\
&=& \frac{ \alpha _{H} 2 ^{2H-1} \Gamma (H-\frac{1}{2}) }{ (2 \pi)^{d + \frac 1 2}   \Gamma (1-H) \,  \vert \tau \vert ^{2H - \frac{d-\alpha}{2}+1} }
\int_{\mathbb{R} ^{d}} \frac{d\eta } { \vert \eta \vert ^{\alpha } \left( 1+ \frac{ \vert \eta \vert ^{4}} {4}\right)  }.
\end{eqnarray*}
Therefore, in the Riesz kernel case, $\{U(t), t\geq 0\}$ is, up to a constant, a fractional Brownian motion $ B^{\gamma}$
with Hurst parameter $\gamma= H-\frac{d-\alpha}{4}.$

Recall that the spectral density of the fBm $ B^{\gamma}$ with Hurst index $\gamma \in (0,1)$  is given by
$f_{\gamma}(\lambda)= c_{\gamma} \vert \lambda \vert ^{-(1+2\gamma)}$
with $c_{\gamma}= \frac{ \sin (\pi \gamma) \Gamma (1+2\gamma)} {2\pi }.)$ Hence, we have the following corollary.

\begin{corollary}\label{deco2}
Let $U = \left\{U(t), t\geq 0\right\}$ be the Gaussian process defined by (\ref{u1}) such that  the
spatial covariance of $W^{H}$ is given by the Riesz kernel (\ref{riesz}).Then $U$ coincides in distribution with
$C_{0} B ^{\gamma}$  with  $\gamma= H-\frac{d-\alpha}{4}$ and
$$C_{0}^2= \frac{ (2 \pi )^{-d +\frac 1 2} \alpha _{H} 2 ^{2H-1} \Gamma (H-\frac{1}{2}) }
{ \sin \left( \pi (d-\frac{H-\alpha}{4})\right) \Gamma(1+2H -\frac{d-\alpha}{2}) \Gamma (1-H) }
\int_{\mathbb{R} ^{d}} \frac{d\eta } { \vert \eta \vert ^{\alpha} \left( 1+ \frac{ \vert \eta \vert ^{4}} {4}\right)  } .$$
\end{corollary}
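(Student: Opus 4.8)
The plan is to combine the spectral identification of Theorem~\ref{deco} with the classical fact that a centered Gaussian process with stationary increments that vanishes at the origin is determined in law by its spectral density. Concretely, if $X$ and $Y$ are two such processes on $[0,\infty)$ with $X(0)=Y(0)=0$ and the same spectral density $f$, then their increment variances agree, $\mathbb{E}[(X(t)-X(s))^2]=\mathbb{E}[(Y(t)-Y(s))^2]=\int_{\mathbb{R}}(1-\cos((t-s)\tau))f(\tau)\,d\tau$, and the full covariance is recovered by polarization, $\mathbb{E}[X(t)X(s)]=\tfrac12\big(\mathbb{E}[X(t)^2]+\mathbb{E}[X(s)^2]-\mathbb{E}[(X(t)-X(s))^2]\big)$; hence $X$ and $Y$ have the same finite-dimensional distributions. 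So I only need to check that, in the Riesz case, $f_U$ is a constant multiple of the fBm spectral density $f_\gamma(\tau)=c_\gamma|\tau|^{-(1+2\gamma)}$, and then read off that constant.

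I would first verify that $\gamma=H-\frac{d-\alpha}{4}$ lies in $(0,1)$, so that $B^\gamma$ is a bona fide fBm: under the standing condition $d<4H+\alpha$ one has $\gamma>H-\frac{4H}{4}=0$, while $\gamma<H<1$ since $\alpha<d$. Next I would use the explicit evaluation of $f_U$ in the Riesz case, obtained from (\ref{Eq:spectralden1}) with $\mu(d\xi)=|\xi|^{-\alpha}d\xi$ via the self-similar scaling $\xi=|\tau|^{1/2}\eta$, which pulls the whole $\tau$-dependence out as a single power, giving $f_U(\tau)=A\,|\tau|^{-(1+2\gamma)}$ with $1+2\gamma=2H-\frac{d-\alpha}{2}+1$ and $A$ the displayed constant involving $\int_{\mathbb{R}^d}\frac{d\eta}{|\eta|^\alpha(1+|\eta|^4/4)}$. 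Since the spectral density of $C_0B^\gamma$ is $C_0^2 f_\gamma$, matching the two expressions forces $C_0^2 c_\gamma=A$, i.e. $C_0^2=A/c_\gamma$. Substituting $c_\gamma=\frac{\sin(\pi\gamma)\Gamma(1+2\gamma)}{2\pi}$ and simplifying with $1+2\gamma=1+2H-\frac{d-\alpha}{2}$ yields the announced formula for $C_0^2$; the determination principle of the first paragraph then gives $U\stackrel{d}{=}C_0B^\gamma$.

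The step I expect to be the main obstacle is the precise bookkeeping of the multiplicative constants. Everything up to the power $|\tau|^{-(1+2\gamma)}$ is routine, and the structural conclusion $U\stackrel{d}{=}C_0B^\gamma$ follows at once from proportionality of spectral densities; the delicate part is carrying the one-dimensional Parseval/normalization convention consistently from $f_U$ through to $c_\gamma$, so that the gamma-function factors $\Gamma(H-\tfrac12)$, $\Gamma(1-H)$ and the powers of $2$ and $2\pi$ come out exactly as stated rather than merely up to the equivalence $\asymp$ used in (\ref{Eq:spectralden2}).
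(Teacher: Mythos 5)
Your proposal follows essentially the same route as the paper: the authors also evaluate $f_U$ explicitly in the Riesz case via the substitution $\xi=|\tau|^{1/2}\eta$ to extract the power $|\tau|^{-(1+2\gamma)}$, and then identify $C_0^2$ by matching against the fBm spectral density $c_\gamma|\lambda|^{-(1+2\gamma)}$, the law being determined by the spectral density exactly as you argue. Your additional checks (that $\gamma\in(0,1)$ under $d<4H+\alpha$, and the polarization argument for equality of finite-dimensional distributions) are correct and only make explicit what the paper leaves implicit.
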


Now for every $t \ge 0$ we have the following decomposition
$$u(t,x)= U(t) - Y(t), $$
where
\begin{equation*}
Y(t)= \int_{-\infty} ^{0} \int_{\mathbb{R} ^{d}} \left( G(t-u, x-y) -G (-u, x-y) \right) W ^{H}(du, dy).
\end{equation*}

The following theorem shows that the sample function of $\{Y (t), t \ge  0\}$ is smooth, which is
useful for studying the regularity  properties of  the solution process $\{u(t, x), t\geq 0\}$ in the time variable.

\begin{theorem}\label{p2-notes}
Let $x \in \R^d$ be fixed and let $[a,b] \subset (0, \infty)$. Then for any $k\ge 1$ there is a modification of
$\{Y(t), t\geq 0\}$ such that its sample function is almost surely continuously differentiable on $[a, b]$ of
order $k$.
\end{theorem}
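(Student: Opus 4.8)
The plan is to exhibit the derivatives of $Y$ explicitly as Wiener integrals and then show, level by level, that these candidates are genuine sample-path derivatives of a suitable modification. The decisive structural feature is that $Y$ integrates only against the ``past'' of the noise, i.e. over $u\in(-\infty,0)$, so that for every $t\in[a,b]$ and every $u<0$ the time argument of the heat kernel satisfies $t-u\ge a>0$. Consequently $t\mapsto G(t-u,x-y)$ is smooth and, in the spatial Fourier variable, differentiating $G$ in $t$ merely produces polynomial factors that are absorbed by a Gaussian factor bounded away from $0$ uniformly on $[a,b]$. For $1\le j\le k$ I would set
\[
Y_j(t)=\int_{-\infty}^{0}\int_{\mathbb{R}^d}\partial_t^{\,j}G(t-u,x-y)\,W^H(du,dy),
\]
which is the natural candidate for $Y^{(j)}$; note that the $t$-independent term $G(-u,x-y)$ in $Y=Y_0$ disappears after one differentiation, so no subtraction is needed for $j\ge1$.

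First I would establish the three analytic facts that drive the argument, all via the Parseval computation already used in the proof of Theorem~\ref{deco} together with (\ref{Fourier-heat}). Since $\mathcal F\big(\partial_t^{\,j}G(t-u,x-\cdot)\big)(\xi)=(-|\xi|^2/2)^{j}e^{i\langle x,\xi\rangle}e^{-\frac12(t-u)|\xi|^2}$, the same reduction as in Theorem~\ref{deco}, after scaling $|\xi|^2$ out of the double time integral (which contributes the finite constant $C=\int_0^\infty\!\int_0^\infty e^{-(w+w')/2}|w-w'|^{2H-2}\,dw\,dw'<\infty$ for $H\in(\tfrac12,1)$), yields
\[
\mathbb{E}\big[Y_j(t)^2\big]\asymp \int_{\mathbb{R}^d}|\xi|^{4j-\alpha-4H}\,e^{-t|\xi|^2}\,d\xi,
\]
which is finite and uniformly bounded for $t\in[a,b]$: the factor $e^{-t|\xi|^2}\le e^{-a|\xi|^2}$ controls $|\xi|^{4j}$ at infinity, while integrability near the origin follows from $4j>4H+\alpha-d$, valid for every $j\ge1$ since $4H+\alpha-d<4$ by $H<1$ and $\alpha<d$. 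Applying the same computation to the increment and using $(e^{-\frac t2|\xi|^2}-e^{-\frac s2|\xi|^2})^2\le \tfrac14|\xi|^4|t-s|^2 e^{-a|\xi|^2}$ gives $\mathbb{E}[(Y_j(t)-Y_j(s))^2]\le C|t-s|^2$ on $[a,b]$; Gaussianity upgrades this to moment bounds of every order, so Kolmogorov's criterion furnishes a continuous (indeed Hölder) modification $\tilde Y_j$ of each $Y_j$. Finally, dominated convergence in the $\xi$-integral, with dominating function a multiple of $|\xi|^{4j+4-\alpha-4H}e^{-a|\xi|^2}$, shows that $Y_{j+1}$ is the mean-square derivative of $Y_j$, i.e. $\mathbb{E}\big[(h^{-1}(Y_j(t+h)-Y_j(t))-Y_{j+1}(t))^2\big]\to0$ as $h\to0$.

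With these facts in hand I would assemble the $C^k$ modification by a downward induction. Because $r\mapsto Y_j(r)$ is mean-square differentiable with mean-square-continuous derivative $Y_{j+1}$, the fundamental theorem of calculus for $L^2(\Omega)$-valued maps gives $Y_j(t)-Y_j(a)=\int_a^t Y_{j+1}(r)\,dr$ as a Bochner integral in $L^2(\Omega)$; since $\tilde Y_{j+1}$ is a continuous modification of $Y_{j+1}$, a Fubini argument identifies this Bochner integral with the pathwise integral $\int_a^t\tilde Y_{j+1}(r)\,dr$ almost surely. Defining $\tilde Y_j(t):=Y_j(a)+\int_a^t\tilde Y_{j+1}(r)\,dr$ for $j=k-1,\dots,0$ (with $\tilde Y_k$ the continuous modification from the previous step) therefore produces, at each level, an almost-sure modification of $Y_j$ whose sample paths satisfy $\tilde Y_j'=\tilde Y_{j+1}$ on $[a,b]$. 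Intersecting the finitely many full-probability events, $\tilde Y_0$ is a modification of $Y=Y_0$ with $\tilde Y_0^{(k)}=\tilde Y_k$ continuous, hence $\tilde Y_0\in C^k([a,b])$ almost surely, which is the assertion. I expect the only genuinely delicate step to be the uniform control of the spectral integrals after $k$-fold differentiation—checking that the polynomial factors $|\xi|^{4j}$ produced by $\partial_t^{\,j}$ are dominated at infinity and that integrability near $\xi=0$ survives; both hinge on the bound $t-u\ge a>0$, which is precisely the property distinguishing the smooth ``past'' part $Y$ from the merely Hölder-continuous stationary part $U$ of Theorem~\ref{deco}.
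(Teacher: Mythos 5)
Your proof is correct and follows essentially the same route as the paper's: identify the mean-square derivatives of $Y$ as Wiener integrals against $\partial_t^{\,j}G$, use the Parseval/spectral representation together with the fact that $t-u\ge a>0$ to bound $\mathbb{E}\big[(Y_j(t)-Y_j(s))^2\big]$ by $C|t-s|^2$, and conclude via Kolmogorov's continuity theorem and iteration. The only differences are presentational: you scale $|\xi|^2$ out of the double time integral where the paper invokes the restricted Fourier transform in $\tau$, and you spell out the passage from mean-square to pathwise derivatives (the Bochner-integral induction) that the paper delegates to its references.
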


\begin{proof}:   The method of proof is similar to those of \cite[Proposition 3.1]{FK14} and  \cite[Theorem 4.8]{XX11},
but is more complicated in our fractional-colored noise case.

The mean square derivative of $Y$ at $t \in (0, \infty)$ can be expressed as
\[
Y'(t) = \int_{-\infty}^0 \int_{\mathbb{R} ^{d}} G'(t-u, x-y) \, W^{H}(du, dy),
\]
where $G' := \partial G/\partial t$. This can be verified by checking the covariance functions.
For every $s, t \in (0, \infty)$ with $s\leq t$, similarly to the
proof of Theorem \ref{deco}, we derive
\begin{eqnarray*}
\mathbb{E}\big( |Y'(t)-Y'(s) | ^{2}\big) &=& \mathbb{E} \left( \int_{-\infty} ^{0} \int_{\mathbb{R} ^{d}} 
\left( G'(t-u, x-y) -G' (s-u, x-y) \right) W ^{H}(du, dy)\right) ^{2} \\
&=&
\frac {\alpha _{H}} {4 (2 \pi) ^{d}}\, \int_{\mathbb{R}^d} \, \vert \xi \vert ^{4-\alpha} d\xi 
\int_{0} ^{\infty} \int _{0} ^{\infty} \frac{dudv}{\vert u-v\vert ^{2-2H}} \\
&&\times
\left( e^{-\frac{1}{2} (t+u)\vert \xi \vert ^{2}}- e^{-\frac{1}{2} (s+u) \vert \xi \vert ^{2}}\right)
\left( e^{-\frac{1}{2} (t+v) \vert \xi \vert ^{2}}- e^{-\frac{1}{2} (s+v)\vert \xi \vert ^{2}}\right).
\end{eqnarray*}
In the above, we have used the fact that the Fourier transform of the function $y  \mapsto G'(t+u, y)$ is
\[
\frac{\partial}{\partial t}\Big({\cal F}G(t+u, \cdot)(\xi)\Big) = - \frac 1 2 |\xi|^2 e^{-\frac{1}{2} (t+u)\vert \xi \vert ^{2}}.
\]
Denote by $\cF _{0, \infty} $ the restricted Fourier transform of $f\in L^{1} (0, \infty)$ defined by $(\cF _{0, \infty} f)(\tau )= \int_{0} ^{\infty} e ^{-ix\tau} f(x)dx$, $\tau \in \mathbb{R}$. By applying the Parseval relation (\ref{parseval}) for the  restricted transform (see Lemma A1 in \cite{BalanTudor1}), we see that for all $s, t \in [a, b]$
with $s < t$,
\begin{eqnarray*}
\mathbb{E} \big(|Y'(t)-Y'(s) | ^{2} \big)&=& C\int_{\mathbb{R}^{d} } \, \vert \xi \vert ^{4-\alpha} d\xi
\int_{\mathbb{R}} \frac{d\tau}{\vert \tau \vert ^{2H-1} } \left|   {\cal{F}}_{0, \infty} \left( e ^{-\frac{1}{2}(t+ \cdot) \vert \xi \vert ^{2}}-
e^{-\frac{1}{2}(s+ \cdot) \vert \xi \vert ^{2}}\right)(\tau)\right|  ^{2} \\
&=& C\int_{\mathbb{R}^{d} } \, \vert \xi \vert ^{4-\alpha} d\xi
\int_{\mathbb{R}} \frac{d\tau}{\vert \tau \vert ^{2H-1} } \left| e^{-\frac{1}{2} t\vert \xi \vert ^{2} }- e^{-\frac{1}{2} s\vert \xi \vert ^{2} }\right| ^{2}
\frac{1}{ \tau ^{2} + \frac{\vert \xi \vert ^{4}}{4}}\\
&=& C\int_{\mathbb{R}^{d} } \vert \xi \vert ^{4 - \alpha -4H}   e^{-s |\xi|^2}\, \left| 1- e^{-\frac{1}{2} (t-s) \vert \xi \vert ^{2}}\right| ^{2} d\xi \int_{\mathbb{R}} \frac{d \tau}{ (\vert \tau \vert ^{2} + \frac{1}{4} ) \vert \tau \vert ^{2H-1} }\\
&\leq & C \vert t-s \vert ^{2}\, \int_{\mathbb{R}^{d} } \vert \xi \vert ^{8 - \alpha -4H}   e^{-a |\xi|^2}\, d\xi.
\end{eqnarray*}
Hence, as in \cite{FK14,XX11},  by using Kolmogorov's continuity theorem, we can find a modification of $Y$ such that $Y(t)$
is continuously differentiable on $[a, b]$. Iterating this argument yields the conclusion of Theorem \ref{p2-notes}.
\end{proof}

\vskip0.2cm

\begin{remark} In \cite{MZ}, the authors obtained a similar decomposition for the solution to  the  linear heat equation with white noise in time and Riesz covariance in space in dimension $d=1$. Our Theorem \ref{deco} shows that the time behavior of the process $u$ is very similar to the behavior of the bifractional Brownian motion (see the main result in \cite{LN09}).
\end{remark}

By applying Theorems \ref{deco},  \ref{p2-notes} and the results on uniform and local moduli of continuity for Gaussian processes
(see e.g. \cite[Chapter 7]{MRbook} or \cite{MWX13}), we derive the following regularity results on the solution process
 $\{u(t, x), t \ge 0\}$, when $x\in \R^d$ is fixed.
For simplicity, we  avoid the point $t=0$.

\begin{prop}
Let $x\in \mathbb{R} ^{d} $ be fixed. Then for any $0<a<b<\infty$, we have
\begin{equation*}
\lim _{\varepsilon \to 0} \sup _{s, t \in [a, b], \vert s-t\vert \leq \varepsilon }  \frac{ \left| u(t,x)- u(s,x) \right| }{ \vert s-t\vert ^{\gamma}  \sqrt { \log (1/(t-s) }}
=c_3\ \ \  \mbox{ a.s.},
\end{equation*}
where $0<c_3< \infty$ is a constant that may depend on $a,b, \gamma $ and $x$. Or
\begin{equation*}
\limsup _{ \varepsilon \to 0} \sup _{s, t \in [a, b], \vert s-t\vert \leq \varepsilon }  \frac{ \left| u(t,x)- u(s,x) \right| }
{  \varepsilon  ^{\gamma}  \sqrt { \log (1/\varepsilon) }}  = c_3 \ \ \  \mbox{ a.s. }
\end{equation*}
Here and in Propositions \ref{prop:ILI-t} and \ref{prop:Chung-t},  $\gamma= H-\frac{d-\alpha}{4}$.
\end{prop}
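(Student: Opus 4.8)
The plan is to transfer the problem from the solution $u(\cdot,x)$ to the stationary-increment process $U$ and then quote the general Gaussian theory. First I would invoke the decomposition $u(t,x)=U(t)-Y(t)$ established above together with Theorem~\ref{p2-notes}: the latter provides a modification of $Y$ that is continuously differentiable on $[a,b]$, so that $|Y(t)-Y(s)|\le \big(\sup_{[a,b]}|Y'|\big)\,|t-s|$ for all $s,t\in[a,b]$, with the supremum a.s.\ finite. Since condition (\ref{cc}) forces $\gamma=H-\frac{d-\alpha}{4}\in(0,1)$, we have $|t-s|\big/\big(|t-s|^{\gamma}\sqrt{\log(1/|t-s|)}\big)=|t-s|^{1-\gamma}/\sqrt{\log(1/|t-s|)}\to0$ as $|t-s|\to0$. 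Hence the $Y$-increment contributes nothing to either normalized supremum in the limit, and it suffices to prove both displayed identities with $u(t,x)-u(s,x)$ replaced by $U(t)-U(s)$.

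For the process $U$, Theorem~\ref{deco} shows that it is centered, has stationary increments, and has spectral density $f_U(\tau)\asymp|\tau|^{-(1+2\gamma)}$ under (\ref{hyp}); equivalently its incremental variance satisfies $\sigma_U^2(h):=\E\big[(U(t+h)-U(t))^2\big]\asymp|h|^{2\gamma}$, in agreement with (\ref{reg}). I would then apply the exact uniform (and local) modulus-of-continuity theorems for Gaussian processes with stationary increments from \cite[Chapter~7]{MRbook} and \cite{MWX13}. For a process whose incremental standard deviation is regularly varying of index $\gamma\in(0,1)$ these give
\[
\lim_{\varepsilon\to0}\ \sup_{\substack{s,t\in[a,b]\\ |s-t|\le\varepsilon}}\frac{|U(t)-U(s)|}{\sigma_U(|t-s|)\sqrt{2\log(1/|t-s|)}}=1\qquad\text{a.s.},
\]
and rewriting $\sigma_U(|t-s|)$ in terms of $|t-s|^{\gamma}$ yields the first identity with a finite positive $c_3$; the second identity is the same theorem stated at the scale $\varepsilon$ rather than $|t-s|$, which accounts for the $\limsup$ there.

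The step I expect to be the main obstacle is showing that the normalized supremum has an \emph{exact} almost-sure limit, rather than a $\limsup$ confined between two positive constants. The upper bound needs only the metric-entropy (Fernique-type) estimate implied by $\sigma_U(h)\asymp h^{\gamma}$, but the matching lower bound requires a quantitative nondegeneracy of the increments of $U$, namely \textbf{strong local nondeterminism}, combined with a Borel--Cantelli argument along a fine net of points in $[a,b]$. This property should be read off from the spectral representation (\ref{Eq:spectralden1}) and the lower bound in (\ref{Eq:spectralden2}), exactly as for fractional Brownian motion. In the Riesz-kernel case everything is explicit: by Corollary~\ref{deco2}, $U=C_0B^{\gamma}$, so $\sigma_U(h)=C_0|h|^{\gamma}$ holds with an exact constant, the classical modulus constant for $B^{\gamma}$ applies, and the dependence of $c_3$ on $x$ enters only through $C_0$. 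For a general tempered measure $\mu$ merely satisfying (\ref{hyp}) the identification of the precise value of $c_3$ is the delicate point, since then $\sigma_U(h)$ is only comparable to $|h|^{\gamma}$; pinning down an exact limit requires the sharp asymptotic $\sigma_U^2(h)\sim c\,|h|^{2\gamma}$, which must be extracted from the leading large-$\tau$ behavior of the spectral integral in (\ref{Eq:spectralden1}).
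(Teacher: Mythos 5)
Your argument is essentially the paper's own: the authors obtain this proposition precisely by combining the decomposition $u(t,x)=U(t)-Y(t)$ with Theorems~\ref{deco} and~\ref{p2-notes} (so that the smooth part $Y$ is negligible against $|t-s|^{\gamma}\sqrt{\log(1/|t-s|)}$) and then invoking the exact uniform and local modulus results of \cite[Chapter 7]{MRbook} and \cite{MWX13} for the stationary-increment process $U$, exactly as you propose. Your closing concern about identifying $c_3$ is not an obstacle to the statement as written: the results cited from \cite{MWX13} give the \emph{existence} of a positive finite almost-sure limit constant under the comparability $\E[(U(t+h)-U(t))^2]\asymp |h|^{2\gamma}$ together with the spectral lower bound in (\ref{Eq:spectralden2}) (via a Fernique-type lower inequality and a zero-one law), without requiring the sharp asymptotic $\sigma_U^2(h)\sim c|h|^{2\gamma}$ or an explicit value of the constant.
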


\begin{prop}\label{prop:ILI-t}
Let $ x\in \mathbb{R} ^{d}$ be fixed. Then for any $t_{0} >0$ we have
\begin{equation*}
\limsup_ {\varepsilon \to 0} \frac{\sup _{\vert t-t_0\vert \leq \varepsilon }  \left| u(t,x)- u(t_{0},x) \right| }
{  \varepsilon ^{\gamma}  \sqrt {\log  \log (1/\varepsilon) }}= c_4\ \  \mbox{ a.s.},
\end{equation*}
where $0<c_4 < \infty$ is a constant.
\end{prop}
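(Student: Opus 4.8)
The plan is to transfer the problem from $u(\cdot,x)$ to the process $U$ of Theorem \ref{deco}, whose increment structure is exactly that of a strongly locally nondeterministic Gaussian process and for which a local law of the iterated logarithm is available from the general theory. First I would fix $0<a<t_0<b<\infty$ and use the decomposition $u(t,x)=U(t)-Y(t)$. By Theorem \ref{p2-notes} (with $k=1$) the process $Y$ has a modification that is continuously differentiable on $[a,b]$, so there is an almost surely finite random variable $M$ with $\sup_{|t-t_0|\le\varepsilon}|Y(t)-Y(t_0)|\le M\varepsilon$ for all small $\varepsilon$. Since $\gamma=H-\frac{d-\alpha}{4}\in(0,1)$ (positivity from $d<4H+\alpha$, and $\gamma<H<1$ because $\alpha<d$), we have $\varepsilon=o\big(\varepsilon^{\gamma}\sqrt{\log\log(1/\varepsilon)}\big)$. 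A triangle inequality then gives
\[
\Big|\,\sup_{|t-t_0|\le\varepsilon}|U(t)-U(t_0)|-\sup_{|t-t_0|\le\varepsilon}|u(t,x)-u(t_0,x)|\,\Big|\le M\varepsilon=o\big(\varepsilon^{\gamma}\sqrt{\log\log(1/\varepsilon)}\big),
\]
so it suffices to prove the stated identity with $u(\cdot,x)$ replaced by $U$, and the constant $c_4$ will be the LIL constant of $U$.

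Next I would record the two structural facts about $U$ that feed the general LIL. On the one hand, $U$ has stationary increments (Theorem \ref{deco}), and writing $\sigma^2(h)=\mathbb{E}[(U(t+h)-U(t))^2]=2\int_{\mathbb{R}}(1-\cos(h\tau))f_U(\tau)\,d\tau$, the spectral estimate \eqref{Eq:spectralden2}, $f_U(\tau)\asymp|\tau|^{-(2H-\frac{d-\alpha}{2}+1)}=|\tau|^{-(2\gamma+1)}$, yields after the scaling $\tau\mapsto\tau/h$ that $\sigma^2(h)\asymp h^{2\gamma}$, consistent with \eqref{reg}; this fixes the normalization $\varepsilon^{\gamma}$. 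On the other hand — and this is the crux — the same two-sided bound $f_U(\tau)\asymp|\tau|^{-(2\gamma+1)}$ places $U$ in the class of stationary-increment Gaussian processes satisfying strong local nondeterminism: there is a constant $c>0$ with
\[
\mathrm{Var}\big(U(t)\,\big|\,U(s_1),\dots,U(s_n)\big)\ge c\,\min_{1\le j\le n}|t-s_j|^{2\gamma}
\]
for every $t$ and all $s_1,\dots,s_n$. I would derive this from the spectral representation of $U$ by the Fourier/Plancherel argument of Pitt, extended to this class of spectral densities in \cite{Xiao07, Xiao1}.

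With these two inputs the LIL follows from the general results for strongly locally nondeterministic Gaussian processes in \cite[Chapter 7]{MRbook} (see also \cite{Xiao07, Xiao1}). The finiteness $c_4<\infty$ is the upper bound and comes from $\sigma^2(h)\asymp h^{2\gamma}$ together with the Borel--TIS inequality and a chaining/entropy estimate over the dyadic scales $\varepsilon_k=2^{-k}$, followed by Borel--Cantelli. The positivity $c_4>0$ is the lower bound: strong local nondeterminism lets one extract, along a geometric sequence $\varepsilon_k$, increments of $U$ that are conditionally nondegenerate at scale $\varepsilon_k^{\gamma}$, and a second Borel--Cantelli argument via the Gaussian lower tail of these conditional increments forces the normalized supremum to exceed a fixed positive level infinitely often. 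That the limsup is a deterministic constant is a consequence of the Gaussian zero--one law: it is measurable with respect to the germ field $\bigcap_{\delta>0}\sigma\{U(t)-U(t_0):0<|t-t_0|<\delta\}$ and hence almost surely constant.

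The main obstacle is the lower bound, and within it the verification of strong local nondeterminism. The upper bound is routine once $\sigma^2(h)\asymp h^{2\gamma}$ is known, but the lower bound genuinely requires the asymptotic-independence structure encoded by strong local nondeterminism, which rests on the \emph{two-sided} spectral estimate \eqref{Eq:spectralden2} rather than on the variance alone. I would therefore devote the most care to establishing the conditional-variance bound above and to the Borel--Cantelli step that converts it into the almost sure lower bound at the correct $\sqrt{\log\log(1/\varepsilon)}$ rate.
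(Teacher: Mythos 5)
Your proposal is correct and follows essentially the same route as the paper, which derives Proposition \ref{prop:ILI-t} by combining the decomposition $u(t,x)=U(t)-Y(t)$ with Theorem \ref{deco} (stationary increments and the two-sided spectral estimate \eqref{Eq:spectralden2} for $U$), Theorem \ref{p2-notes} (smoothness of $Y$), and the general local modulus of continuity results for Gaussian processes in \cite[Chapter 7]{MRbook} and \cite{MWX13}. The additional detail you supply on strong local nondeterminism and the zero--one law is exactly the content of those cited general results, so nothing in your argument departs from the paper's approach.
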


By applying Theorems \ref{deco}, \ref{p2-notes} and the Chung-type law of iterated logarithm in 
\cite{MR95},  we derive immediately

\begin{prop}\label{prop:Chung-t}
Let $ x\in \mathbb{R} ^{d}$ be fixed. Then for any $t_{0} >0$ we have
\begin{equation*}
\liminf_ {\varepsilon \to 0} \frac{\sup _{\vert t-t_0\vert \leq \varepsilon }  \left| u(t,x)- u(t_{0},x) \right| }
{  (\varepsilon /\log  \log (1/\varepsilon))^{\gamma} }= c_5\ \ \ \mbox{ a.s.},
\end{equation*}
where $0<c_5 < \infty$ is a constant depending on the small ball probability of the Gaussian process $U$
in Theorem \ref{deco}.
\end{prop}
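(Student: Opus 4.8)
The plan is to deduce the statement from the decomposition $u(t,x) = U(t) - Y(t)$ established above, thereby reducing the Chung-type law for $u$ to the corresponding law for the stationary-increment process $U$, and then to invoke the general Chung-type law of the iterated logarithm of \cite{MR95}. First I would record that $0 < \gamma < 1$: indeed $\gamma = H - \frac{d-\alpha}{4} < H < 1$ since $d > \alpha$, while $\gamma > 0$ is exactly condition (\ref{cc}). Next, fix a closed interval $[a,b] \subset (0,\infty)$ containing $t_0$ in its interior. By Theorem \ref{p2-notes}, a modification of $\{Y(t), t \ge 0\}$ is continuously differentiable on $[a,b]$, hence Lipschitz there, so that $\sup_{|t-t_0|\le \varepsilon}|Y(t)-Y(t_0)| \le C\varepsilon$ for all small $\varepsilon$. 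Dividing by the normalizing factor and using $\gamma < 1$ gives
$$\frac{\sup_{|t-t_0|\le\varepsilon}|Y(t)-Y(t_0)|}{(\varepsilon/\log\log(1/\varepsilon))^\gamma} \le C\,\varepsilon^{1-\gamma}\,(\log\log(1/\varepsilon))^\gamma \longrightarrow 0 \qquad (\varepsilon \to 0).$$
Combining this with $u(t,x)-u(t_0,x) = \bigl(U(t)-U(t_0)\bigr) - \bigl(Y(t)-Y(t_0)\bigr)$ and the triangle inequality, the smooth remainder is asymptotically negligible, so that
$$\liminf_{\varepsilon\to 0}\frac{\sup_{|t-t_0|\le\varepsilon}|u(t,x)-u(t_0,x)|}{(\varepsilon/\log\log(1/\varepsilon))^\gamma} = \liminf_{\varepsilon\to 0}\frac{\sup_{|t-t_0|\le\varepsilon}|U(t)-U(t_0)|}{(\varepsilon/\log\log(1/\varepsilon))^\gamma}\qquad\text{a.s.}$$

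It then remains to apply the Chung-type law of the iterated logarithm to $U$. By Theorem \ref{deco}, $U$ has stationary increments, and by (\ref{Eq:spectralden2}) its spectral density satisfies $f_{U}(\tau)\asymp |\tau|^{-(2\gamma+1)}$; equivalently, using (\ref{reg}) together with the negligibility of $Y$ just established (or directly via $\sigma^2(h)=2\int_{\mathbb{R}}(1-\cos h\tau)\,f_U(\tau)\,d\tau$), its incremental variance satisfies $\mathbb{E}\bigl[(U(t)-U(s))^2\bigr] \asymp |t-s|^{2\gamma}$, a regularly varying function of index $2\gamma \in (0,2)$ at the origin. These are precisely the structural hypotheses required by the results of \cite{MR95}, whose Chung-type law then yields
$$\liminf_{\varepsilon\to 0}\frac{\sup_{|t-t_0|\le\varepsilon}|U(t)-U(t_0)|}{(\varepsilon/\log\log(1/\varepsilon))^\gamma} = c_5\qquad\text{a.s.},$$
with $c_5 \in (0,\infty)$ determined by the small-ball probability $\mathbb{P}\bigl(\sup_{|t|\le 1}|U(t)|\le x\bigr)$ as $x \to 0$. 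By stationarity of the increments this constant is independent of $t_0$, which completes the argument.

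The displayed computations are routine; the only genuine input is the verification that $U$ satisfies the hypotheses of \cite{MR95}. The main obstacle, namely establishing the sharp two-sided spectral (equivalently, variance) estimate that controls the small-ball behavior of $U$, is already resolved by Theorem \ref{deco}. Once this regularity is in hand, the Chung-type constant $c_5$ is forced by the small-deviation asymptotics of $U$, and the reduction from $u$ to $U$ through the continuously differentiable remainder $Y$ presents no further difficulty.
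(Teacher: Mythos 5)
Your proposal is correct and takes essentially the same approach as the paper: the paper derives Proposition \ref{prop:Chung-t} ``immediately'' from Theorems \ref{deco} and \ref{p2-notes} together with the Chung-type law of the iterated logarithm in \cite{MR95}, and your write-up simply makes explicit the two ingredients the paper leaves implicit, namely the negligibility of the $C^1$ remainder $Y$ under the normalization $(\varepsilon/\log\log(1/\varepsilon))^{\gamma}$ with $\gamma<1$, and the verification via the spectral estimate (\ref{Eq:spectralden2}) that $U$ satisfies the hypotheses of \cite{MR95}.
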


Further properties on the local times and fractal behavior of the solution process $\{u(t, x), t \ge 0\}$,
when $x \in \R^d$ is fixed,  can be derived from \cite{Xiao96,Xiao07,Xiao1}.

\subsection{Sharp  regularity in space}
\label{sec:space}

In this section we fix $t > 0$ and analyze the space regularity of the solution $\{u(t, x), x\in \R^d\}$.
We start with the following result.

 \begin{theorem}\label{stationary}
 For each $t > 0$, the Gaussian random field $\{u(t, x), x\in \R^d\}$ is stationary with spectral measure
 \[
 \Delta (d\xi) = \alpha _{H} (2\pi) ^{-d}  \int_{0} ^{t} \int_{0} ^{t} \frac{dudv} {\vert u-v\vert ^{2-2H}} e^{-\frac{(u+v) \vert \xi \vert ^{2}}{2}}\, \mu(d\xi).
 \]
 \end{theorem}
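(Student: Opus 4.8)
The plan is to compute the covariance $\E\big(u(t,x)u(t,y)\big)$ at the fixed time $t$, show that it depends on $x$ and $y$ only through $x-y$, and read off its Fourier (spectral) representation. First I would apply the Wiener isometry for integrals against the fractional-colored noise $W^{H}$ — the same covariance structure that produced (\ref{cov1}) — to write
\[
\E\big(u(t,x)u(t,y)\big)=\alpha_H\int_0^t\!\!\int_0^t\frac{du\,dv}{|u-v|^{2-2H}}\int_{\R^d}\!\!\int_{\R^d}G(t-u,x-z)\,f(z-z')\,G(t-v,y-z')\,dz\,dz'.
\]
For $u,v<t$ the functions $z\mapsto G(t-u,x-z)$ and $z'\mapsto G(t-v,y-z')$ are Gaussian, hence Schwartz, so the inner spatial double integral is exactly of the form handled by Parseval's relation (\ref{parseval}).

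Next I would substitute the Fourier transforms from (\ref{Fourier-heat}): with $\varphi(z)=G(t-u,x-z)$ and $\psi(z')=G(t-v,y-z')$ one has $\cF\varphi(\xi)=\exp\big(i\langle x,\xi\rangle-\tfrac12(t-u)|\xi|^2\big)$ and $\cF\psi(\xi)=\exp\big(i\langle y,\xi\rangle-\tfrac12(t-v)|\xi|^2\big)$, so that $\cF\varphi(\xi)\overline{\cF\psi(\xi)}=e^{i\langle x-y,\xi\rangle}\,e^{-\frac12(2t-u-v)|\xi|^2}$. Parseval then turns the spatial integral into a $\mu(d\xi)$-integral carrying the factor $e^{i\langle x-y,\xi\rangle}$ and the exponent $2t-u-v$. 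The substitution $u\mapsto t-u,\ v\mapsto t-v$ leaves the square $[0,t]^2$ and the weight $|u-v|^{2-2H}$ invariant while converting $2t-u-v$ into $u+v$, matching the exponent in the statement.

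An application of Fubini's theorem then lets me pull $e^{i\langle x-y,\xi\rangle}$ outside, giving
\[
\E\big(u(t,x)u(t,y)\big)=\int_{\R^d}e^{i\langle x-y,\xi\rangle}\,\Delta(d\xi),
\]
with $\Delta$ as claimed. Since the covariance depends on $(x,y)$ only through $x-y$, the centered Gaussian field $\{u(t,x),x\in\R^d\}$ is stationary, and the last display exhibits $\Delta$ as its spectral measure.

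The step demanding the most care is the justification of Parseval and Fubini, that is, the absolute convergence of the multiple integral. Since $|e^{i\langle x-y,\xi\rangle}|=1$, the modulus of the integrand is dominated by its value at $x=y$, and that non-negative integral equals $\E\big(u(t,x)^2\big)=\Delta(\R^d)$, which is finite precisely under the existence condition (\ref{29m-1}); moreover the temporal weight $|u-v|^{2H-2}$ is integrable on $[0,t]^2$ because $2H-2>-1$ for $H\in(\tfrac12,1)$. Tonelli's theorem thus yields absolute convergence and legitimizes every interchange of integration performed above.
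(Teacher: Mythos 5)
Your proof is correct and follows essentially the same route as the paper: express the covariance via the Wiener isometry, apply Parseval's relation (\ref{parseval}) together with the Fourier transform (\ref{Fourier-heat}) of the Green kernel, perform the change of variables $u\mapsto t-u$, $v\mapsto t-v$, and exchange the order of integration to read off the spectral measure. Your added justification of the Fubini/Tonelli step via the finiteness of $\Delta(\R^d)$ under (\ref{29m-1}) is a welcome detail that the paper leaves implicit.
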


 \begin{proof}:  It follows from the Fourier transform of the Green kernel in (\ref{Fourier-heat}) and
Parseval's identity (\ref{parseval}) that   the covariance function of $\{u(t, x), x\in \R^d\}$ is
\begin{eqnarray*}
\E \big(  u(t, x) u(t,y) \big)
& = & \frac{\alpha _{H}} {(2\pi) ^{d}}   \int_{0} ^{t} \int_{0} ^{t} \frac{dudv} {\vert u-v\vert ^{2-2H}} \int_{\mathbb{R}^{d}} e^{i \langle x-y, \xi \rangle}
e^{-\frac 1 2 (t-u)  \vert \xi \vert ^{2}} \, e^{-\frac 1 2 (t-v)  \vert \xi \vert ^{2}} \mu (d\xi) \\
& =& \frac{\alpha _{H}} {(2\pi) ^{d}}  \int_{\mathbb{R}^{d}}    e^{i \langle x-y, \xi \rangle} \bigg(\int_{0} ^{t} \int_{0} ^{t} \frac{dudv} {\vert u-v\vert ^{2-2H}}
e^{-\frac 1 2 (u+v)  \vert \xi \vert ^{2}} \bigg)\, \mu (d\xi).
\end{eqnarray*}
The conclusion of Theorem \ref{stationary} follows.
 \end{proof}

It has been shown in \cite[Proposition 4.3]{BalanTudor2} that there exist two strictly positive constants $c_{1,H}, c_{2,H}$  such that
\begin{eqnarray}
\label{n1}
c_{1,H} (t^{2H}\wedge 1) \left( \frac{1}{1+ \vert \xi \vert ^{2}} \right) ^{2H} &\leq&  \int_{0} ^{t} \int_{0} ^{t} \frac{dudv}
{ \vert u-v\vert ^{2-2H}}   e^{-\frac{(u+v) \vert \xi \vert ^{2}}{2}} \nonumber \\
&& \quad \qquad \le  c_{2,H} (t^{2H}+1) \left( \frac{1}{1+ \vert \xi \vert ^{2}} \right) ^{2H}.
\end{eqnarray}
This and Condition (\ref{hyp}) imply that the spectral measure  $\Delta (d\xi)$ is comparable with an absolutely continuous
measure with a density function that is comparable to $|\xi|^{-(\alpha + 4H)}$ for all $\xi \in \R^d$ with $|\xi| \ge 1$.
As shown in \cite{Pitt, LX12, Xiao07, Xiao1},  this information is very useful for studying regularity and other sample path
properties of the Gaussian random field $\{u(t,x), x \in \R^d\}$. In the following we show some consequences.

We start with the following estimate on $\E \big(\vert u(t,x) -u(t,y) \vert ^{2}\big)$.  To this end, let $\beta = \min \{1, 2H - \frac{d -\alpha}2 \}$,
and let
$$
\rho = \left\{\begin{array}{ll}
1\ \  &\hbox{ if } \, \beta = 1,\cr
0 &\hbox{ otherwise.}
\end{array}
\right.
$$
\begin{theorem}\label{pspace}
Assume that  (\ref{hyp}) and (\ref{cc}) hold. For any $M>0$ and $t>0$,
there exist  positive and finite constants
$c_{6}$,  $c_{7}$ such that for any $x,y \in [-M, M]^{d} $,
\begin{equation}\label{variogram-space}
c_{6} \vert x-y\vert ^{2\beta} \Big(\log \frac 1 {|x-y|}\Big)^\rho \leq
\E \big(\left| u(t,x) -u(t,y) \right| ^{2}\big) \le c_{7} \vert x-y\vert ^{2 \beta} \Big(\log \frac 1 {|x-y|}\Big)^\rho.
\end{equation}
\end{theorem}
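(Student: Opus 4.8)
The plan is to reduce the whole statement to the spectral integral supplied by Theorem~\ref{stationary}. Since $\{u(t,x),\,x\in\R^d\}$ is stationary with spectral measure $\Delta$, its increment variance admits the representation
\[
\E\big(|u(t,x)-u(t,y)|^{2}\big)=2\int_{\R^{d}}\big(1-\cos\langle x-y,\xi\rangle\big)\,\Delta(d\xi),
\]
so that, writing $h=x-y$, the theorem is equivalent to sharp two-sided bounds on $I(h):=\int_{\R^{d}}(1-\cos\langle h,\xi\rangle)\,\Delta(d\xi)$ as $|h|\to0$. The first step is to record, from the two-sided estimate (\ref{n1}) combined with the comparison (\ref{hyp}), that $\Delta$ is comparable to an absolutely continuous measure whose density $\psi$ is bounded on $\{|\xi|\le1\}$ and satisfies $\psi(\xi)\asymp|\xi|^{-(\alpha+4H)}$ on $\{|\xi|\ge1\}$. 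Setting $\alpha+4H=d+2\kappa$, i.e. $\kappa=2H-\tfrac{d-\alpha}{2}>0$ (positivity is precisely (\ref{cc})), the problem becomes the classical one of estimating $I(h)$ for a radial spectral density decaying like $|\xi|^{-(d+2\kappa)}$ at infinity.

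I would then split $I(h)=\int_{|\xi|\le1}+\int_{|\xi|>1}$. On $\{|\xi|\le1\}$ the bound $0\le1-\cos\langle h,\xi\rangle\le\tfrac12|h|^{2}|\xi|^{2}$ together with boundedness of $\psi$ shows this piece is $O(|h|^{2})$. The tail is the decisive term: using rotational invariance to take $h=|h|e_{1}$ and the substitution $\xi=\zeta/|h|$, the homogeneity of $|\xi|^{-(d+2\kappa)}$ extracts a factor $|h|^{2\kappa}$, the only subtlety being the distortion of the cutoff region under the rescaling. Tracking this, the outcome splits into the expected regimes. When $\kappa<1$ the full-space scaling integral converges and dominates the $O(|h|^{2})$ error, giving $I(h)\asymp|h|^{2\kappa}=|h|^{2\beta}$ with $\beta=\kappa$; in the borderline case $2H-\tfrac{d-\alpha}{2}=1$ (so $\beta=1$ and $\rho=1$) the scaling integral diverges logarithmically, and estimating the annular contribution $\{1\le|\xi|\le1/|h|\}$ directly produces the extra factor $\log(1/|h|)$, i.e. $I(h)\asymp|h|^{2}\log(1/|h|)$. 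This matches $|h|^{2\beta}\big(\log 1/|h|\big)^{\rho}$ with $\beta$ and $\rho$ as defined.

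For the lower bound I would argue by positivity of the integrand. When $\kappa<1$ it suffices to restrict $I(h)$ to the annulus $\{1/|h|\le|\xi|\le 2/|h|\}$, on which $1-\cos\langle h,\xi\rangle$ is bounded below on a subset of proportional measure; since $\psi(\xi)\asymp|h|^{d+2\kappa}$ and the annulus has measure $\asymp|h|^{-d}$, this already yields $I(h)\gtrsim|h|^{2\kappa}$. In the critical case the same device, summed over the dyadic shells $\{2^{j}\le|\xi|\le2^{j+1}\}$ for $0\le 2^{j}\lesssim1/|h|$, recovers the logarithmic factor. Uniformity over $x,y\in[-M,M]^{d}$ is automatic, since all constants depend only on $t,H,\alpha,d$ through (\ref{n1}) and (\ref{hyp}) and not on the base point.

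The main obstacle I anticipate is the bookkeeping in the borderline case: the exact scaling used for $\kappa\neq1$ breaks down there, so the matching upper and lower logarithmic bounds must be obtained from a direct dyadic decomposition of the tail integral, with the constants coming from (\ref{n1}) and (\ref{hyp}) tracked carefully so that both inequalities in (\ref{variogram-space}) carry the same power $\rho$ of the logarithm. A secondary point needing care is the power-case lower bound, where one must check that restriction to a single frequency annulus captures a fixed fraction of the mass of $1-\cos\langle h,\xi\rangle$ uniformly in the direction of $h$.
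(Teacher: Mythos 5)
Your overall strategy is essentially the paper's: both arguments reduce the increment variance to the spectral integral $\int_{\R^d}\big(1-\cos\langle x-y,\xi\rangle\big)\,\Delta(d\xi)$, use (\ref{n1}) together with (\ref{hyp}) to replace $\Delta$ at high frequencies by a density comparable to $|\xi|^{-(\alpha+4H)}$, and then run a scaling argument whose borderline divergence produces the logarithm. The paper performs the rescaling in spherical coordinates on the whole integral (restricting to $|\xi|\ge 1$ for the lower bound) rather than splitting at $|\xi|=1$, but that is cosmetic; your single-annulus and dyadic-shell lower bounds are a clean reorganization of what the paper gets by integrating $r^{d+1-\alpha-4H}$ over $[|z|,1]$, and your care about capturing a fixed fraction of the mass of $1-\cos\langle h,\xi\rangle$ uniformly in the direction of $h$ is exactly the right point to worry about.

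The one genuine gap is that you treat only two of the three regimes. Writing $\kappa=2H-\frac{d-\alpha}{2}$, the theorem (through $\beta=\min\{1,\kappa\}$) also covers $\kappa>1$, which is non-vacuous since $H$ may be close to $1$ and $d-\alpha$ small; the paper's proof explicitly lists this as case (i). In that regime your machinery as stated fails: the annulus $\{1/|h|\le|\xi|\le 2/|h|\}$ only yields a lower bound of order $|h|^{2\kappa}=o(|h|^{2})$, and ``the tail is the decisive term'' is no longer the right picture, since both the low- and high-frequency pieces are $O(|h|^{2})$ and a matching lower bound of order $|h|^{2}$ must be extracted from somewhere. The repair is easy --- for the upper bound use $1-\cos\langle h,\xi\rangle\le\frac12\langle h,\xi\rangle^{2}$ globally, noting that $\int_{\R^d}|\xi|^{2}\,\Delta(d\xi)<\infty$ when $\kappa>1$, and for the lower bound restrict to a fixed annulus such as $\{1\le|\xi|\le 2\}$ --- but it has to be said. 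Incidentally, this is also the regime where your computation should be checked against the statement: the true order there is $|h|^{2}$ with no logarithm (consistent with the differentiability asserted in Theorem \ref{Diff}), whereas the definition of $\rho$ assigns $\rho=1$ whenever $\beta=1$; the paper's own proof inserts the logarithm only in the critical case $\kappa=1$, and your two treated cases happen to be exactly the ones where the stated value of $\rho$ matches the asymptotics you derive.
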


\begin{proof}: Take $x,y \in [-M, M]^{d} $ and let  $z:=x-y\in \R^d$.
Using again  (\ref{Fourier-heat}) and Parseval's identity,  we can write
\begin{eqnarray*}
&&\E \big( \left| u(t, x+z)- u(t,x) \right| ^{2}\big)\\
&& = \alpha _{H} (2\pi) ^{-d}  \int_{0} ^{t} \int_{0} ^{t} \frac{dudv} {\vert u-v\vert ^{2-2H}} \int_{\mathbb{R}^{d}}\big| e^{-i \langle \xi, z \rangle}-1\big| ^{2}
e^{-\frac{u \vert \xi \vert ^{2}}{2}}\, e^{-\frac{v \vert \xi \vert ^{2}}{2}} \mu (d\xi) \\
&& \asymp  \int_{\mathbb{R}^{d}} (1-\cos \langle \xi, z \rangle ) \frac{d\xi}{ \vert \xi \vert ^{\alpha}}
\int_{0} ^{t} \int_{0}^{t} \frac{dudv}{ \vert u-v\vert ^{2-2H}}\,   e^{-\frac{(u+v) \vert \xi \vert ^{2}}{2}}.
\end{eqnarray*}

Let us first prove the lower bound in (\ref{variogram-space}). Using (\ref{n1}), with $c_{1,H,t}$
a generic strictly positive constant depending
on $t,H$ (that may change from line to line) and the lower bound in (\ref{cc}), we derive
\begin{eqnarray*}
\E \left| u(t, x+z)- u(t,x) \right| ^{2}&\geq&  c_{1,H,t}  \int_{\vert \xi \vert \geq 1} \frac{d\xi }
 {\vert \xi \vert ^{\alpha}} \left( \frac{1}{1+ \vert \xi \vert ^{2}} \right) ^{2H}(1-\cos \langle \xi, z \rangle )\\
&\geq &c_{1,H,t}  \int_{\vert \xi \vert \geq 1} \frac{d\xi} { \vert \xi \vert ^{\alpha +4H}} (1-\cos \langle \xi, z \rangle ).
\end{eqnarray*}
By making the change of variables using spherical coordinates,  we have
\begin{equation}\label{Eq:new1}
\E \left| u(t, x+z)- u(t,x) \right| ^{2}
\geq  c_{1,H,t}\vert z\vert ^{-d+ \alpha +4H} \int_{{\mathbb S}^{d-1}} \int_{\vert z\vert}^\infty  r^{d-1 -\alpha -4H} (1- \cos (r \langle \theta, \theta_z\rangle )
dr \sigma(d\theta),
\end{equation}
where $\theta_z =\frac{z} {\vert z\vert}$ and $\sigma(d \theta)$ is the uniform measure on  the unit sphere ${\mathbb S}^{d-1}$.

Next we distinguish three cases: (i) $2H - \frac{d -\alpha}2 >1$, (ii) $2H - \frac{d -\alpha}2 =1$ and (iii) $2H - \frac{d -\alpha}2 <1$.

In case (i) and (ii), we observe that for $|z|$ small,
\begin{eqnarray*}
\int_{\vert z\vert}^\infty  r^{d-1 -\alpha -4H} (1- \cos (r \langle \theta, \theta_z\rangle )
dr &\geq&  \frac 1 2 \int_{\vert z\vert}^1  r^{d-1 -\alpha -4H} (r \langle \theta, \theta_z\rangle )^2  dr\\
&\ge&  C \vert z \vert ^{d +2 -\alpha - 4H} \Big(\log \frac 1 {|z|}\Big)^\rho \langle \theta, \theta_z\rangle ^2,
\end{eqnarray*}
where the extra factor $\log \frac 1 {|z|}$ appears in case (ii). Plugging this into (\ref{Eq:new1}) gives the desired lower bound.

In case (iii), the integrand $r \mapsto r^{d-1 -\alpha -4H} (1- \cos (r \langle \theta, \theta_z\rangle )$ is integrable at both 0 and  infinity.
The fact that $x,y\in [-M,M] ^{d}$ ensures the integral has a positive lower bound.
This gives the lower bound in  (\ref{variogram-space}).




Now we verify the upper bound in \eqref{variogram-space}. Similarly to the above, the right-hand side of (\ref{n1}) and (\ref{cc}) imply
\begin{eqnarray*}
&&\E \big(\left| u(t, x+z)- u(t,x) \right| ^{2}\big)\\
&& \leq c_{2,H,t}    \vert z\vert ^{-d+ \alpha +4H} \int_{{\mathbb S}^{d-1}} \int_{0}^\infty  r^{d-1 -\alpha }\left( \frac{1}{\vert z\vert ^{2}+ r^{2} } \right) ^{2H}  (1- \cos (r \langle \theta, \theta_z\rangle )  dr \sigma(d\theta).
\end{eqnarray*}
Again, by distinguish three cases: (i) $2H - \frac{d -\alpha}2 >1$, (ii) $2H - \frac{d -\alpha}2 =1$ and (iii) $2H - \frac{d -\alpha}2 <1$, we can verify that the upper bound in \eqref{variogram-space}. Since this is elementary, we omit the details.
\end{proof}

Theorem \ref{pspace} suggests that the sample function $x \mapsto u(t, x)$ is rough (or fractal) when  $2H - \frac{d -\alpha}2 \le 1$,
and is differentiable when $2H - \frac{d -\alpha}2>1$. This is indeed the case as shown by the following theorem.

\begin{theorem}\label{Diff}
Assume that  (\ref{hyp}) and (\ref{cc}) hold and $t > 0$ is fixed.  If $2H - \frac{d -\alpha}2>1$, then  $\{u(t, x), x \in \R^d\}$
has a modification (still denoted by the same notation) such that almost surely the sample
function $x \mapsto u(t, x)$ is continuously differentiable on $\R^d$. Moreover, for any $M>0$,
there exists a positive positive random variable $K$ with all moments   such that for every $j = 1,  \ldots, d$,
the partial derivative $\frac{\partial}{\partial x_j} u(t,x)$ has the following modulus of continuity on $ [-M, M]^{d} $:
\begin{equation}\label{mod-derivative}
\sup_{x, y \in [-M, M]^{d}, |x-y|\le \varepsilon} \Big|\frac{\partial}{\partial x_j} u(t,x) - \frac{\partial}{\partial y_j} u(t,y) \Big|
\le K \varepsilon^{2H - \frac{d -\alpha}2-1} \sqrt{\log \frac 1 {\varepsilon}}.
\end{equation}
\end{theorem}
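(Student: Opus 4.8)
The plan is to realize the partial derivatives of $x\mapsto u(t,x)$ as Wiener integrals against $W^{H}$ and to transfer the question to an increment-variance estimate in the spectral domain, exactly in the spirit of Theorems \ref{p2-notes} and \ref{pspace}. First I would introduce, for each $j\in\{1,\dots,d\}$, the candidate field
\[
V_j(t,x) = \int_0^t\int_{\mathbb{R}^d} \frac{\partial}{\partial x_j} G(t-s,x-y)\, W^{H}(ds,dy),
\]
a centered Gaussian random field, and check that it is well defined (has finite variance) precisely under the standing hypothesis. Indeed, by Theorem \ref{stationary} the field $u(t,\cdot)$ is stationary with spectral measure $\Delta(d\xi)$, so $V_j(t,\cdot)$ is stationary with spectral measure $\xi_j^2\,\Delta(d\xi)$; by the comparison of $\Delta(d\xi)$ with $|\xi|^{-(\alpha+4H)}d\xi$ for $|\xi|\ge 1$ recorded after Theorem \ref{stationary}, finiteness of $\int_{\mathbb{R}^d}\xi_j^2\,\Delta(d\xi)$ is equivalent to convergence of $\int_{|\xi|\ge1}|\xi|^{2-\alpha-4H}d\xi$, which holds if and only if $2+d<\alpha+4H$, i.e. exactly when $2H-\frac{d-\alpha}2>1$. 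A standard argument (matching covariance functions, as for $Y'$ in the proof of Theorem \ref{p2-notes}) then identifies $V_j$ as the mean-square partial derivative of $u(t,\cdot)$ in the $x_j$ direction.

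The core estimate is the increment variance of $V_j$. Using the Fourier transform (\ref{Fourier-heat}) and Parseval's identity (\ref{parseval}) as in the proof of Theorem \ref{pspace}, for $z=x-y$ one gets
\[
\E\big(|V_j(t,x)-V_j(t,y)|^2\big)
= \frac{\alpha_H}{(2\pi)^d}\int_0^t\!\!\int_0^t\frac{du\,dv}{|u-v|^{2-2H}}\int_{\mathbb{R}^d}\xi_j^2\,\big(2-2\cos\langle\xi,z\rangle\big)\,e^{-\frac{(u+v)|\xi|^2}2}\mu(d\xi).
\]
Invoking (\ref{n1}) and (\ref{hyp}) to replace the $u,v$-integral and $\mu$ by $|\xi|^{-\alpha}(1+|\xi|^2)^{-2H}\,d\xi$, this is comparable to $\int_{\mathbb{R}^d}(1-\cos\langle\xi,z\rangle)\,\xi_j^2\,|\xi|^{-\alpha}(1+|\xi|^2)^{-2H}\,d\xi$. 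Passing to spherical coordinates and rescaling by $|z|$ exactly as in (\ref{Eq:new1}) shows this integral is comparable to $|z|^{2(2H-\frac{d-\alpha}2-1)}$ in the generic range $1<2H-\frac{d-\alpha}2<2$, with an extra logarithmic factor at the critical value $2H-\frac{d-\alpha}2=2$ and with the argument to be iterated when the exponent exceeds $2$. I expect this scaling analysis, together with the bookkeeping of the three cases $2H-\frac{d-\alpha}2<2$, $=2$, $>2$, to be the main technical obstacle, although it is a direct analogue of the computation already carried out in Theorem \ref{pspace}.

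Finally, with the increment-variance bound $\E\big(|V_j(t,x)-V_j(t,y)|^2\big)\le C\,|x-y|^{2(2H-\frac{d-\alpha}2-1)}$ in hand, I would invoke the general theory of uniform moduli of continuity for stationary Gaussian fields (e.g. \cite[Chapter 7]{MRbook}); the universal $\sqrt{\log(1/\varepsilon)}$ factor produced by that theory yields precisely the modulus (\ref{mod-derivative}) for each $V_j$, with a random constant $K$ possessing all moments by Gaussian concentration (the Borell--TIS inequality). The continuity of all the fields $V_1,\dots,V_d$, combined with the fundamental-theorem-of-calculus argument used in \cite{FK14, XX11} (integrating $V_j$ along coordinate segments and matching with the increments of $u(t,\cdot)$), upgrades the mean-square derivatives to genuine sample-path partial derivatives and shows that $x\mapsto u(t,x)$ admits a modification that is almost surely continuously differentiable on $\mathbb{R}^d$, completing the proof.
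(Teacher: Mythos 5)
Your proposal follows essentially the same route as the paper: identify the mean-square partial derivative via the stationary spectral measure $\xi_j^2\,\Delta(d\xi)$, use (\ref{n1}) and (\ref{hyp}) to bound the increment variance by $C|x-y|^{4H+\alpha-d-2}$, and conclude with the general Gaussian modulus-of-continuity results from \cite{MRbook}, just as the authors do (they cite Theorem \ref{p2-notes} and \cite{XX11} for the smoothness upgrade you spell out via the fundamental theorem of calculus). The only superfluous element is your case analysis at and above $2H-\frac{d-\alpha}{2}=2$: since $H<1$ and $\alpha<d$ force $2H-\frac{d-\alpha}{2}<2$, the exponent $4H+\alpha-d-2$ always lies in $(0,2)$ and only your generic case occurs, which is exactly the observation the paper makes.
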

\begin{proof}: The method of proof is similar to that of Theorem \ref{p2-notes} above or \cite[Theorem 4.8]{XX11}. By applying Theorem \ref{stationary},
we can show that the mean square partial derivative $\frac{\partial}{\partial x_j} u(t,x)$ exists and
\begin{eqnarray*}
\E\bigg(\Big|\frac{\partial}{\partial x_j} u(t,x) - \frac{\partial}{\partial y_j} u(t,y) \Big|^2\bigg)
&=& \int_{\R^d} \xi_j^2 \big|e^{i \langle \xi, x\rangle} - e^{i \langle \xi, y\rangle}\big|^2 \Delta(d\xi)\\
&\asymp& \int_{\R^d} \xi_j^2 \big( 1- \cos  \langle \xi, x-y\rangle  \big) \Big(\frac 1 {1 + |\xi|^2}\Big)^{2H} \frac{d\xi} {|\xi|^\alpha} \\
&\le& C\, |x-y|^{4H +\alpha - d - 2}.
\end{eqnarray*}
In the above, we have used the fact that $0< 4H +\alpha - d - 2 < 2$, so the last inequality can be derived as in the case of fractional Brownian motion.
Finally the desired result follows from the general results on modulus of continuity of   Gaussian random fields (cf. \cite[Chapter 7]{MRbook}
or \cite[Section 4]{Xiao1}).
\end{proof}


Finally, we consider the non-smooth case. For simplicity, we assume  $2H - \frac{d -\alpha}2 <1$. The case $2H - \frac{d -\alpha}2 =1$
is more subtle and will require significant extra work.

By combining Theorem \ref{stationary} and relation (\ref{n1})  with the results in \cite[Section 8]{Pitt} (see also \cite{LX12, Xiao07}),
we obtain the following useful lemma.

\begin{lemma} \label{prop:SLND}
Suppose $ 2H-\frac{d-\alpha}{2} < 1$. Then ,for every fixed $t>0$, the Gaussian field $\{u(t,x), x \in \mathbb{R} ^{d} \}$
is strongly locally nondeterministic. Namely,
for every $M>0$, there exists a constant $c_{8}>0$ (depending on $t$ and $M$) such that for every $n\geq 1$ and
for every $ x, y_{1},..., y_{n} \in [-M,M] ^{d} $,
\begin{equation*}
{\rm Var}  \left( u(t,x) | u(t, y_{1}),\ldots, u(t, y_{n} ) \right) \geq c_{8} \min _{ 0\leq j\leq n} \{ \vert x -y_{j} \vert^{ 4H+\alpha -d} \},
\end{equation*}
where $y_{0}=0$.
\end{lemma}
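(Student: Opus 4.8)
The plan is to establish strong local nondeterminism (SLND) directly from the spectral representation of the stationary field $\{u(t,x),\, x\in\R^d\}$ provided by Theorem \ref{stationary}, combined with the two-sided bound (\ref{n1}) and the standard harmonic-analytic criterion for SLND of stationary Gaussian fields (as in \cite[Section 8]{Pitt}; see also \cite{LX12, Xiao07}). The key observation is that (\ref{n1}) together with Condition (\ref{hyp}) shows that the spectral measure $\Delta(d\xi)$ is comparable, for $|\xi|\ge 1$, to an absolutely continuous measure whose density behaves like $|\xi|^{-(4H+\alpha)}$. Since $2H-\frac{d-\alpha}{2}<1$ is exactly the condition $4H+\alpha-d<2$, and since (\ref{cc}) guarantees $4H+\alpha-d>0$, the exponent $\beta:=4H+\alpha-d$ lies in $(0,2)$, which is precisely the range in which such a spectral decay yields SLND with index $\beta$.

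The steps, in order, are as follows. First, I would record the spectral density $g(\xi)$ of $\{u(t,x),\,x\in\R^d\}$, namely
\[
g(\xi) = \alpha_H (2\pi)^{-d}\,\phi(\xi)\,|\xi|^{-\alpha},
\qquad
\phi(\xi):=\int_0^t\int_0^t \frac{dudv}{|u-v|^{2-2H}}\,e^{-\frac{(u+v)|\xi|^2}{2}},
\]
where the factor $|\xi|^{-\alpha}$ comes from (\ref{hyp}). By (\ref{n1}), for $|\xi|\ge 1$ we have $g(\xi)\asymp |\xi|^{-(4H+\alpha)}$, with constants depending on $t$ and $H$. Second, I would invoke Pitt's criterion: a stationary Gaussian field whose spectral density $g$ satisfies $g(\xi)\asymp |\xi|^{-\beta-d}$ for large $|\xi|$, with $\beta\in(0,2)$, is strongly locally nondeterministic of order $\beta$, in the sense that for all $x,y_1,\dots,y_n$ in a fixed bounded set,
\[
\mathrm{Var}\big(u(t,x)\,\big|\,u(t,y_1),\dots,u(t,y_n)\big)
\ge c_8 \min_{0\le j\le n}\{|x-y_j|^{\beta}\}.
\]
Here $\beta+d = 4H+\alpha$, so $\beta=4H+\alpha-d$, matching the claimed exponent. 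The verification of the criterion reduces to constructing, for each fixed $x$, a suitable test function (a bump supported near $x$) and using the Plancherel/Parseval bound against $g$ to control the prediction error from below by the minimal distance; this is where the lower bound in (\ref{n1}) is essential, as it forces enough spectral mass at high frequencies.

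The main obstacle is the high-frequency behavior of the spectral density and the matching of constants. One must confirm that the low-frequency part of $\Delta(d\xi)$ (where $|\xi|<1$, and where $\phi(\xi)$ is bounded above and below by positive constants depending on $t$ but $|\xi|^{-\alpha}$ may be large near the origin when $\alpha>0$) does not interfere with the SLND estimate, which is governed entirely by the tail $|\xi|\to\infty$. Since the contribution of the bounded-frequency region is a smooth (entire, or at worst finitely singular but locally integrable) covariance perturbation, it does not degrade local nondeterminism; the rigorous way to see this is to apply Pitt's argument to the full density $g$ and note that the criterion only requires the two-sided comparison $g(\xi)\asymp(1+|\xi|^2)^{-(4H+\alpha)/2}$, which (\ref{n1}) and (\ref{hyp}) supply after absorbing the origin singularity into the comparison constant via the local integrability $|\xi|^{-\alpha}\in L^1_{loc}(\R^d)$ valid for $\alpha<d$. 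Once this comparison is in hand, the SLND estimate follows directly from the cited general results, with the index $4H+\alpha-d$ and a constant $c_8$ depending only on $t$ and $M$.
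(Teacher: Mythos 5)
Your proposal is correct and follows essentially the same route as the paper: the authors likewise combine Theorem \ref{stationary} and the two-sided bound (\ref{n1}) (which, with (\ref{hyp}), make the spectral measure comparable to a density $\asymp |\xi|^{-(4H+\alpha)}$ for $|\xi|\ge 1$) and then invoke the SLND criterion of \cite[Section 8]{Pitt} (see also \cite{LX12, Xiao07}) with index $4H+\alpha-d\in(0,2)$. Your additional remarks on the harmless low-frequency singularity and the range of the exponent are sound elaborations of what the paper leaves implicit.
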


Because of this, the Gaussian random field $\{u(t,x), x \in \mathbb{R} ^{d} \}$ shares many local properties with fractional
Brownian motion $B^{\beta} = \{B^\beta (x), x \in \R^d\}$ with $\beta = 2H-\frac{d-\alpha}{2}$.
We give some examples.

\begin{prop}
\label{prop:moduli-space}
(Uniform and local moduli of continuity)
Suppose $ 2H-\frac{d-\alpha}{2} < 1$. Let $t > 0$ and $M>0$ be fixed. Then
\begin{itemize}
\item[(a)]\, almost surely,
\begin{equation*}
\lim_{\varepsilon \to 0} \frac { \max _{x\in [-M, M] ^{d}, \vert h\vert \leq \varepsilon } |u(t, x+h)- u(t, x)|}
{ \varepsilon^{\beta} \sqrt{ \log (1/\varepsilon) }}= c_{9}.
\end{equation*}
\item[(b)]\, For $x_{0} \in \mathbb{R}^{d}$,
\begin{equation*}
\limsup _{\varepsilon \to 0} \frac{ \max_ {\vert h\vert \leq \varepsilon } \vert u( t, x_{0}+h)- u(t, x_{0}) \vert }
{\varepsilon ^{\beta} \sqrt{\log \log (1/\varepsilon )} }= c_{10}.
\end{equation*}
\end{itemize}
In the above, $\beta= 2H-\frac{d-\alpha}{2}$ and $0< c_{9}, c_{10} < \infty$ are constants.
\end{prop}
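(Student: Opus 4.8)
The plan is to obtain both (a) and (b) as applications of the general theory of exact uniform and local moduli of continuity for strongly locally nondeterministic Gaussian random fields, as developed in \cite[Chapter 7]{MRbook}, \cite{MWX13} and \cite{Xiao1}. All the structural input needed is already in hand. By Theorem \ref{stationary} the field $\{u(t,x),\, x\in\R^d\}$ is stationary; since we are in the regime $2H-\frac{d-\alpha}{2}<1$ we have $\rho=0$ and $\beta=2H-\frac{d-\alpha}{2}$, so Theorem \ref{pspace} reduces to the sharp two-sided estimate
\[
\E\big(|u(t,x)-u(t,y)|^{2}\big)\asymp |x-y|^{2\beta},\qquad x,y\in[-M,M]^{d};
\]
and by Lemma \ref{prop:SLND} the field is strongly locally nondeterministic on $[-M,M]^{d}$. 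Consequently the canonical (Dudley) metric $d(x,y)=\big(\E|u(t,x)-u(t,y)|^{2}\big)^{1/2}\asymp|x-y|^{\beta}$, so at small scales $\{u(t,\cdot)\}$ behaves, up to constants, like an index-$\beta$ fractional Brownian field.

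For the upper bounds in (a) and (b) I would use Gaussian chaining and the Fernique-type tail estimates of \cite{MWX13}. Since $d(x,y)\asymp|x-y|^{\beta}$, the covering number of $[-M,M]^{d}$ in the $d$-metric at radius $r$ is of order $r^{-d/\beta}$, and Dudley's entropy integral gives $\max_{|h|\le\varepsilon}|u(t,x+h)-u(t,x)|\le C\,\varepsilon^{\beta}\sqrt{\log(1/\varepsilon)}$ uniformly over $x\in[-M,M]^{d}$; a Borel--Cantelli argument along the dyadic scales $\varepsilon_{n}=2^{-n}$ then yields the local upper bound $C\,\varepsilon^{\beta}\sqrt{\log\log(1/\varepsilon)}$ at a fixed $x_{0}$.

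The matching lower bounds are the heart of the matter, and this is where strong local nondeterminism is indispensable. The key compatibility is that the conditional-variance exponent in Lemma \ref{prop:SLND} is exactly $4H+\alpha-d=2\beta$, which matches the variogram exponent; thus on a cube of side $\varepsilon$ one may select $\asymp\varepsilon^{-d}$ well-separated points whose increments have conditional variances bounded below by $c_{8}\,\varepsilon^{2\beta}$. This near-independence is precisely what is needed to run the second-moment / Borel--Cantelli lower-bound argument of \cite[Chapter 7]{MRbook} and \cite{MWX13} and to conclude that the maximal oscillation is at least of order $\varepsilon^{\beta}\sqrt{\log(1/\varepsilon)}$ (for the uniform statement) and $\varepsilon^{\beta}\sqrt{\log\log(1/\varepsilon)}$ (for the local statement). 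I expect this SLND decoupling step --- verifying that the conditional variances stay above the right threshold simultaneously at the chosen family of points --- to be the main obstacle, the upper bounds being routine chaining.

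Finally, that the normalized quantities converge to single deterministic constants $c_{9}$ and $c_{10}$ rather than merely staying between two positive constants follows from the sharp form of the theorems of \cite{MWX13} together with a zero--one law: the expressions in (a) and (b) are measurable with respect to the germ field of the stationary Gaussian field $\{u(t,\cdot)\}$, hence almost surely equal to constants.
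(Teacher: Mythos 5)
Your proposal is correct and follows essentially the same route as the paper: the paper's proof consists of exactly the reduction you describe, namely feeding the stationarity, the two-sided variogram estimate of Theorem \ref{pspace} (with $\rho=0$ in this regime), and the strong local nondeterminism of Lemma \ref{prop:SLND} into the general moduli-of-continuity theorems \cite[Theorems 4.1 and 5.1]{MWX13}. The only difference is one of presentation --- you unpack the chaining, SLND decoupling and zero--one law arguments that those cited theorems encapsulate, while the paper simply invokes them.
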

\begin{proof}: The conclusion (a) follows from Lemma \ref{prop:SLND} and  \cite[Theorem 4.1]{MWX13},
while (b) follows from \cite[Theorem 5.1]{MWX13}.
\end{proof}

\vskip0.3cm

\begin{prop} \label{prop:Chung-space}
(Chung's LIL)
Suppose $ 2H-\frac{d-\alpha}{2} < 1$. Then for every $t > 0$ and $x_{0} \in \mathbb{R}^{d}$, we have
\begin{equation*}
\liminf _{\varepsilon \to 0} \frac{ \max_ {\vert h\vert \leq \varepsilon } \vert u( t, x_{0}+h)- u(t, x_{0}) \vert }
{\varepsilon ^{\beta} / (\log \log (1/\varepsilon )^{ \beta }}= c_{10},
\end{equation*}
where $\beta= 2H-\frac{d-\alpha}{2}$ and $0< c_{10} < \infty$  is a constant.
\end{prop}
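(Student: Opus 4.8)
The plan is to deduce the result from the general theory of Chung-type laws of the iterated logarithm for strongly locally nondeterministic stationary Gaussian fields, exactly as the time case (Proposition~\ref{prop:Chung-t}) was reduced to \cite{MR95}. Fix $t>0$, set $\beta=2H-\frac{d-\alpha}{2}\in(0,1)$ and $M(\varepsilon)=\max_{|h|\le\varepsilon}|u(t,x_0+h)-u(t,x_0)|$. Three facts are already in hand: by Theorem~\ref{stationary} the field $\{u(t,x),\,x\in\R^d\}$ is stationary; by Theorem~\ref{pspace}, with $\rho=0$ because $\beta<1$, its canonical metric obeys $\E(|u(t,x)-u(t,y)|^2)\asymp|x-y|^{2\beta}$ uniformly on any cube $[-M,M]^d$; and by Lemma~\ref{prop:SLND} it is strongly locally nondeterministic, ${\rm Var}\big(u(t,x)\mid u(t,y_1),\dots,u(t,y_n)\big)\ge c_{8}\min_{0\le j\le n}|x-y_j|^{2\beta}$ (recall $4H+\alpha-d=2\beta$). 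After rescaling to the unit ball through $V_\varepsilon(h)=\varepsilon^{-\beta}\big(u(t,x_0+\varepsilon h)-u(t,x_0)\big)$ both estimates become uniform in small $\varepsilon$, one has $M(\varepsilon)=\varepsilon^\beta\sup_{|h|\le1}|V_\varepsilon(h)|$, and the whole problem reduces to the small ball behaviour of the family $\{V_\varepsilon\}$.

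The crux is the small ball estimate: I would prove that there are constants $0<K_1\le K_2<\infty$, independent of $\varepsilon$, such that
\[
\exp\!\big(-K_2\,x^{-d/\beta}\big)\ \le\ \PP\Big(\sup_{|h|\le1}|V_\varepsilon(h)|\le x\Big)\ \le\ \exp\!\big(-K_1\,x^{-d/\beta}\big)
\]
for all small $x>0$. For the upper bound on this probability (the field is unlikely to remain small) strong local nondeterminism is the essential tool: cover the unit ball by $\asymp x^{-d/\beta}$ sub-balls of radius $\asymp x^{1/\beta}$ with $\asymp x^{1/\beta}$-separated centres, so that by Lemma~\ref{prop:SLND} the conditional standard deviation of $V_\varepsilon$ at each centre, given its values at the previous ones, is $\gtrsim x$; each conditional probability of staying in $[-x,x]$ is then at most some $q<1$, and multiplying the $\asymp x^{-d/\beta}$ conditional factors yields the factor $\exp(-K_1 x^{-d/\beta})$. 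For the matching lower bound (the field can remain small) the relevant input is instead the upper bound on the canonical metric: the $\eta$-covering number of the unit ball under $\sqrt{\E|V_\varepsilon(h)-V_\varepsilon(h')|^2}\asymp|h-h'|^\beta$ is of order $\eta^{-d/\beta}$, and the standard metric-entropy lower bounds for Gaussian small balls then give $\exp(-K_2 x^{-d/\beta})$.

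Granting this estimate, both halves of the law come from Borel--Cantelli along $\varepsilon_n=r^n$, $r\in(0,1)$, with the critical level dictated by the exponent $d/\beta$, namely $x_n=\lambda\,(\log\log(1/\varepsilon_n))^{-\beta/d}$; then $x_n^{-d/\beta}\asymp\lambda^{-d/\beta}\log n$ and $\PP\big(M(\varepsilon_n)\le x_n\varepsilon_n^\beta\big)\asymp n^{-K\lambda^{-d/\beta}}$. For $\liminf\ge c_{10}$ I take $\lambda$ small and use the upper bound on the small ball probability ($K=K_1$) to make the series summable; the first Borel--Cantelli lemma then forces $M(\varepsilon_n)>x_n\varepsilon_n^\beta$ for all large $n$, and monotonicity of $M$ in $\varepsilon$ fills the gaps between consecutive $\varepsilon_n$ (one lets $r\uparrow1$ at the end). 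For $\liminf\le c_{10}$ I take $\lambda$ large and use the lower bound ($K=K_2$) to make the series diverge; combined with the approximate independence of the increments of $u(t,\cdot)$ over the disjoint spherical shells $\{\varepsilon_{n+1}\le|h|\le\varepsilon_n\}$ furnished by Lemma~\ref{prop:SLND}, a second Borel--Cantelli argument gives $M(\varepsilon_n)\le x_n\varepsilon_n^\beta$ infinitely often. A Blumenthal-type zero--one law for the germ field of $u(t,\cdot)$ at $x_0$ shows that the $\liminf$ is an almost sure constant $c_{10}$, and the two bounds confine it to $(0,\infty)$, its precise value being governed by the exact small ball constant.

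The main obstacle is the upper half: upgrading the conditional-variance bound of Lemma~\ref{prop:SLND} to genuine approximate independence of the shell increments so that the second Borel--Cantelli lemma applies. This is the usual difficulty in Chung-type laws for Gaussian fields, and it is resolved by conditioning successively on the field outside each shell and invoking strong local nondeterminism to dominate the resulting conditional covariances, following the scheme of \cite{MR95} and the general framework for locally nondeterministic fields in \cite{Pitt, Xiao07}. The isotropy of the present SLND bound (a single exponent $|x-y_j|^{2\beta}$) makes this step cleaner here than in the anisotropic case.
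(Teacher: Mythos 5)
Your proposal is correct and follows essentially the same route as the paper: the paper's entire proof is the citation of Lemma \ref{prop:SLND} together with the small-ball-probability/Chung-LIL machinery of Li and Shao \cite{LiShao01} (see also \cite[Theorem 3.2]{LX10}), and your argument is precisely an unpacking of that machinery (SLND for the small-ball upper bound, metric entropy for the lower bound, Borel--Cantelli along a geometric sequence, and a zero--one law for the constant). Incidentally, the critical level $(\log \log (1/\varepsilon))^{-\beta/d}$ that your computation produces is the standard one for a $d$-parameter field, which indicates that the exponent in the denominator of the displayed statement should read $\beta/d$ rather than $\beta$.
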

\begin{proof}: It follows from Lemma \ref{prop:SLND} and Li and Shao \cite{LiShao01}. See
also \cite[Theorem 3.2 ]{LX10} for related results.
\end{proof}

\vskip0.2cm

\b We conclude with the following remarks.
\begin{remark}
\begin{itemize}
\item[(i)]\,
Propositions \ref{prop:moduli-space} and \ref{prop:Chung-space} show that, when $\beta = 2H-\frac{d-\alpha}{2} < 1$ and
$t>0$ is fixed,  the solution process
$\{u(t, x), x \in \R^d\}$ behaves locally like a fractional Brownian motion $\{B^\beta(x), x \in \R^d\}$. This is not surprising.
Indeed, similarly to Theorem \ref{p2-notes} and \cite[Proposition 3.1]{FK14}, one can show that $\{u(t, x), x \in \R^d\}$ is
a smooth perturbation of $\{B^\beta(x), x \in \R^d\}$.
\item[(ii)]\,
Further properties on the local times and fractal behavior of the solution process $\{u(t, x), x \in \R^d\}$, when $t >0$ is fixed,
can be derived from \cite{Xiao96,Xiao07,Xiao1}. It is also possible to investigate sample path properties of the
Gaussian random field  $\{u(t, x), t \ge 0, x \in \R^d\}$ in both time and space variables. The methods developed
 in \cite{DMX14} will be useful for this purpose. We will study these problems in a subsequent paper.
\end{itemize}
\end{remark}

\par\bigskip\noindent
{\bf Acknowledgment.} Yimin Xiao thanks Professor  Raluca M. Balan for stimulating discussions.

\end{document}